\def\@seccntformat#1{\hspace*{0mm}%
 \protect\textup{\protect\@secnumfont
   \ifnum\pdfstrcmp{subsection}{#1}=0 \bfseries\fi% subsection # in \bfseries
   \csname the#1\endcsname
   \protect\@secnumpunct
     }%
}
\newtheorem{lemma}{Lemma}
\newtheorem{proposition}{\sbweight Proposition}
\newtheorem{theorem}{\sbweight Theorem}
\theoremstyle{remark}
\newtheorem{remark}{\sc  Remark\rm}[section]
\selectfont\symbol{62}\fontencoding{\encodingdefault}}
\newcommand{\assign}{\coloneqq}
\newcommand{\mathd}{\mathrm{d}}
\newcommand{\tmabbr}[1]{#1}
\newcommand{\tmem}[1]{{\em #1\/}}
\newcommand{\tmmathbf}[1]{\ensuremath{\boldsymbol{#1}}}
\newcommand{\tmname}[1]{\textsc{#1}}
\newcommand{\tmop}[1]{\ensuremath{\operatorname{#1}}}
\newcommand{\tmtextit}[1]{{\itshape{#1}}}
\newcommand{\tmtextsc}[1]{{\scshape{#1}}}
\newcommand{\argminE}{\mathop{\mathrm{argmin}}}
\newcommand{\Ekg}{\mathcal{E}_{\kappa, \gamma}}
\newcommand{\tmu}{\tmmathbf{u}}
\newcommand{\tmv}{\tmmathbf{v}}
\newcommand{\w}{\tmmathbf{w}}
\newcommand{\Othreethree}{O (3, \tmmathbf{e}_3)}
\newcommand{\divv}{div}
\newcommand{\Fsp}{\hspace{0.12em} : \hspace{0.09em}}
\newcommand{\st}{\, :\,}
\newcommand{\abs}[1]{\lvert #1 \rvert}
\newcommand{\e}{\tmmathbf{e}}
\newcommand{\n}{\tmmathbf{n}}
\newcommand{\m}{\ensuremath{\tmmathbf{m}}}
\newcommand{\vv}{\tmmathbf{v}}
\newcommand{\ww}{\tmmathbf{w}}
\newcommand{\Om}{\ensuremath{\Omega}}
\newcommand{\grad}{\nabla}
\newcommand{\RR}{\mathbb{R}}
\newcommand{\NN}{\mathbb{N}}
\newcommand{\Stwo}{\mathbb{S}}
\newcommand{\eqs}{=}
\begin{document}

\title[Symmetry of minimizers]{
Symmetry properties of minimizers of a perturbed Dirichlet energy with a boundary penalization 
}

\author{Giovanni Di Fratta}

\author{Antonin Monteil}

\author{Valeriy Slastikov}

%\date{\today}

\begin{abstract}
We consider \(\mathbb{S}^2\)-valued maps on a domain \(\Omega\subset\mathbb{R}^N\) minimizing a perturbation of the Dirichlet energy with vertical penalization in \(\Omega\) and horizontal penalization on \(\partial\Omega\). We first show the global minimality of universal constant configurations in a specific range of the physical parameters using a Poincar{\'e}-type inequality. Then, we prove that any energy minimizer takes its values into a fixed meridian of the sphere \(\mathbb{S}^2\), and deduce uniqueness of minimizers up to the action of the appropriate symmetry group. We also prove a comparison principle for minimizers with different penalizations. Finally, we apply these results to a problem on a ball and show radial symmetry and monotonicity of minimizers. In dimension $N=2$ our results can be applied to the Oseen--Frank energy for nematic liquid crystals and micromagnetic energy in a thin-film regime.
\end{abstract}

\clearpage\maketitle
\thispagestyle{empty}

\section{Introduction}

The motivation for this study comes from the field of thin structures --- a branch of materials science that is currently 
experiencing rapid growth. The interest in thin structures relies on their applications in miniaturization and integration of electronic devices, but even more on their capability to support the emergence of new physics \cite{gibertini19, hellman17,sheka21}. Indeed, atomically thin materials can be employed to achieve physical properties that are hardly visible in bulk materials. Moreover, combining several atomically thin layers to create new heterostructures allows for the design of novel materials with prescribed properties \cite{novoselov16}.

%The most basic examples include defects in liquid crystals, where the local ordering either disappears or changes abruptly, as well as topologically protected states (the so-called magnetic skyrmions) and domain walls in thin ferromagnetic films. 

In the last twenty years, the thin-structures in micromagnetics and nematic liquid crystals have been an area of active research in both applied mathematics and condensed matter physics (see, e.g., \cite{Carbou01,  Desim2002, Desim2006, DiF201,DiF202, Gioia97, Golo1, Golo2, Ignat, Nit2018, Slast05}). Recent advances in manufacturing thin films and curved layers provide a possibility to design new materials composed of several magnetic monolayers of atomic thickness \cite{gibertini19, streubel16}. These new materials exhibit some unconventional properties, including perpendicular magnetocrystalline anisotropy \cite{dieny17} and Dzyaloshinskii–Moriya interaction (DMI) (or antisymmetric exchange) \cite{dz58, moriya60} and require a new set of reduced theoretical models to predict the magnetization behavior in ferromagnetic samples.  This new physics is often dominated by surface and edge effects, and leads to a surprising behavior near the material boundaries, giving rise to novel magnetization structures \cite{hellman17, muratov17, sheka21, zheng18}.

In this paper, we are interested in studying the ground states of a simplified model (cf.~eq.~\eqref{eq:enpen}), concentrating on their symmetry properties. The model we investigate is closely related to a reduced model for ferromagnetic thin films with strong perpendicular anisotropy in the regime when magnetocrystalline and shape anisotropies have the magnitude of the same order, leading to the preference for in-plane magnetization inside the sample and out-of-plane magnetization behavior on the boundary \cite{kohn05, DMS21}.

Since the energy functionals governing micromagnetic interactions and defects in nematic liquid crystals are mathematically related, our analysis also applies to the analysis of ground states in the thin-film limit Oseen--Frank theory of nematic liquid crystals under weak anchoring conditions.

\subsection{Our model}
Even though our main motivation comes from the study of thin film structures, we formulate and prove our results for domains of arbitrary dimension. Let $\Om \subset \RR^N$, $N\in\mathbb{N}^\ast$, be a smooth bounded domain, and let $\Stwo^2 \subset \RR^3$ be the two-dimensional unit sphere. We consider the energy of a configuration $\m \in H^1 (\Om, \Stwo^2)$, defined by
\begin{equation}
  \mathcal{E}_{\kappa} (\m) = \int_{\Om} \abs{\nabla \m}^2 +
  \kappa^2  \int_{\Om} (\m \cdot \e_3)^2, 
\end{equation}
where $\abs{\nabla \m}^2 = \sum_{i = 1}^N \left| \partial_i \m \right|^2$, $\e_3$=(0,0,1), and $\kappa \in [0,+\infty)$ is some fixed (material-dependent) parameter which takes into account in-plane anisotropic effects. Under natural boundary conditions (which
is the typical case in micromagnetics), the only minimizers of
$\mathcal{E}_{\kappa}$ are the constant in-plane configurations. In this note, we are interested in the problem of minimizing the energy
$\mathcal{E}_{\kappa}$ under an additional penalization term on the boundary of $\Om$ that makes the problem non-trivial: for every $\gamma>0$ we consider the energy functional defined for every $\m \in H^1 (\Om, \Stwo^2)$ by
\begin{align}
  \Ekg (\m) = \int_{\Om} \abs{\nabla \m}^2 + \kappa^2  \int_{\Om} (\m \cdot
  \e_3)^2 + \; \frac{1}{\gamma^2}  \int_{\partial \Om} \abs{\m \times \e_3}^2,
  \label{eq:enpen}
\end{align}
where $\gamma \in (0, + \infty)$ fixes the intensity of the perpendicular anisotropy on $\partial \Om$. The energy in this form naturally appears in the Oseen-Frank model of liquid crystals \cite{frank58} and as a thin film limit of micromagnetic energy for ferromagnetic materials with strong perpendicular anisotropy \cite{DMS21}.

A straightforward application of the Direct methods of the Calculus of
Variations assures that for every $\kappa \in [0,+\infty)$ and $\gamma \in (0, +\infty)$, there exists a global minimizer of the energy $\Ekg$. The minimizers satisfy the following Euler-Lagrange equations in the weak sense, i.e.,  for every
$\tmmathbf{\varphi} \in H^1 \left( \Om, \RR^3 \right)$
\begin{align}
  \int_{\Om} \grad \m \st \grad \tmmathbf{\varphi}+ \kappa^2 \left( \m \cdot
  \tmmathbf{e}_3 \right) (\tmmathbf{\varphi} \cdot \tmmathbf{e}_3) & \eqs
  \int_{\Om} (| \grad \m |^2 + \kappa^2  \left( \m \cdot \tmmathbf{e}_3
  \right)^2) \m \cdot \tmmathbf{\varphi} \nonumber\\
  &   \qquad + \frac{1}{\gamma^2} \int_{\partial \Om} \left[ \left( \m \cdot
  \tmmathbf{e}_3 \right) \e_3 - \left( \m \cdot \tmmathbf{e}_3 \right)^2 \m
  \right] \cdot \tmmathbf{\varphi}. \label{eq:ELweak} 
\end{align}
If a global minimizer \(\m\) is \(\mathcal{C}^1(\overline\Omega,\Stwo^2) \cap \mathcal{C}^2(\Omega,\Stwo^2)\), this means that \(\m\) classically solves 
\begin{equation}
    - \Delta \m + \kappa^2 \left( \m \cdot \tmmathbf{e}_3 \right)
    \tmmathbf{e}_3 \eqs (| \nabla \m |^2 + \kappa^2  \left( \m \cdot
    \tmmathbf{e}_3 \right)^2) \m 
    \quad\text{in \Om}
    \label{eq:ELstrong}
\end{equation}
 together with the nonlinear Robin boundary condition:
\begin{equation}
    \partial_{\n} \m \eqs \frac{1}{\gamma^2} \left[ \left( \m \cdot
    \tmmathbf{e}_3 \right) \e_3 - \left( \m \cdot \tmmathbf{e}_3 \right)^2 \m
    \right]
    \quad \text{on \(\partial \Om\).}
\end{equation}

In the limiting case \(\gamma\to 0\), we have a non-trivial Dirichlet boundary value problem since $\Ekg$ tends to the energy $\mathcal{E}_{\kappa, 0}$ defined for every $\m \in H^1 (\Om,\Stwo^2)$ as
\begin{equation}
  \mathcal{E}_{\kappa, 0} (\m) \assign \left\{\arraycolsep=1.4pt\def\arraystretch{1.6}\begin{array}{ll}
    \mathcal{E}_{\kappa} (\m) & \text{if $\m\pm \e_3\in H^1_0(\Om,\RR^3)$},\\
    + \infty & \tmop{otherwise} .
  \end{array}\right. \label{eq:enstrongpen}
\end{equation}

Note that, we shall write $\Ekg$ for both the boundary penalization problem, corresponding to {\eqref{eq:enpen}} when \(\gamma>0\), and the boundary value problem (with boundary value \(\pm \tmmathbf{e}_3\)), corresponding to \eqref{eq:enstrongpen} when \(\gamma=0\). This is more convenient since many of our results apply to both problems. As in the case \(\gamma>0\), the existence of global minimizers for \(\mathcal{E}_{\kappa,0}\) follows from the Direct methods in the Calculus of Variations.

\subsection{Contributions of the present work}
The aim of the paper is to show the symmetry and uniqueness properties of minimizers of $\Ekg$. In particular, we prove that any minimizer of $\Ekg$ has values in some meridian of the sphere and is unique up to the symmetries in the group of isometries preserving the $\e_3$-axis. 
As a consequence, restricting domain $\Om$ to a ball we also show that any minimizer is radially symmetric and monotone. 

In what follows, we describe the results in more detail.
Our first result concerns the minimality of universal configurations, i.e.,  vector fields $\m
\in H^1 \left( \Om, \Stwo^2 \right)$ which solve the Euler-Lagrange equations
{\eqref{eq:ELweak}} regardless of the value of the boundary
penalization constant $\gamma>0$. Given the dependence  of the boundary term in {\eqref{eq:ELweak}} on $\gamma$, such configurations must satisfy
\begin{equation}
( \m \cdot \tmmathbf{e}_3 ) (\tmmathbf{e}_3-(\m\cdot \tmmathbf{e}_3)\m ) = 0 \quad \text{{\tmabbr{a.e\ }}on
  } \partial \Om .
\end{equation}
It is easy to check that the {\it{constant}} vector fields $\pm
\tmmathbf{e}_3$, as well as any {\it{constant}}  in-plane vector field
$\tmmathbf{e}_{\bot} \in \Stwo^2$, $\e_{\perp} \cdot \e_3 = 0$, are universal
configurations. Concerning these configurations, we prove the following
result, which clarifies how to tune the parameters $\kappa$ and $\gamma$ so
that these configurations emerge as ground states.

\begin{theorem}
  \label{prop:univconf}Let $N\in\mathbb{N}^\ast$ and $\Om \subseteq \RR^N$ be a smooth bounded domain. The following assertions hold:
  \begin{enumerate}[i)]
    \item\label{universal_outofplane} For any $\gamma\in [0,+\infty)$, there exists $\kappa_{\gamma} > 0$, depending only on $\gamma$ and $\Om$, such that for any $\kappa\in [0,\kappa_{\gamma})$ the constant out-of-plane vector fields $\pm \e_3$ are the unique global minimizers of $\Ekg$. In particular, $\pm \e_3$ are the unique solutions of the Dirichlet boundary value problem $\min\mathcal{E}_{\kappa,0}$ if $\kappa\in [0,\kappa_0)$.
     \item\label{universal_inplane} For any $\kappa \in (0,+\infty)$, there exists $\gamma_{\kappa} > 0$, depending only on $\kappa$ and {\Om}, such that for any $\gamma\in (\gamma_{\kappa},+\infty)$ the constant in-plane vector fields $\tmmathbf{e}_{\bot} \in \Stwo^2$, $\e_{\perp} \cdot \e_3 = 0$, are the only global minimizers of $\Ekg$.
  \end{enumerate}
\end{theorem}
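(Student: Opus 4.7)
The strategy is to rewrite, for each candidate $\m_\star\in\{\pm\e_3,\e_\perp\}$, the inequality $\Ekg(\m)\geq\Ekg(\m_\star)$ via the sphere identity $(\m\cdot\e_3)^2+|\m\times\e_3|^2=1$, and to reduce it to a Poincar\'e-type inequality applied to a single scalar extracted from $\m$. The reason this works is that the pointwise constraint converts the horizontal boundary penalty into an $L^2$-norm either of the in-plane part of $\m$ (useful for case (i)) or of its vertical component $\m\cdot\e_3$ (useful for case (ii)); in both cases a quadratic form of the shape $\int_\Om|\nabla u|^2+(\text{bulk or boundary term})$ controls both sides of the target inequality.

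For part (i), a direct computation gives $\Ekg(\pm\e_3)=\kappa^2|\Om|$ and
\[
\Ekg(\m)-\kappa^2|\Om| \;=\; \int_{\Om}|\nabla\m|^2 \,-\, \kappa^2\!\int_{\Om}\bigl(1-(\m\cdot\e_3)^2\bigr) \,+\, \frac{1}{\gamma^2}\!\int_{\partial\Om}|\m\times\e_3|^2.
\]
The pointwise inequality $|\nabla\m|^2\geq|\nabla m_1|^2+|\nabla m_2|^2$ reduces the problem to the scalar Poincar\'e/Robin inequality
\[
\int_{\Om}|\nabla u|^2+\frac{1}{\gamma^2}\!\int_{\partial\Om}u^2 \;\geq\; \kappa^2\!\int_{\Om}u^2,
\]
applied to $u=m_1$ and $u=m_2$ and summed (and to $u\in H^1_0(\Om)$ when $\gamma=0$, since in that case $m_1,m_2$ have vanishing trace). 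I therefore set
\[
\kappa_\gamma^2 \;:=\; \inf_{u\in H^1(\Om)\setminus\{0\}} \frac{\int_{\Om}|\nabla u|^2+\gamma^{-2}\!\int_{\partial\Om}u^2}{\int_{\Om}u^2},
\]
namely the first Robin eigenvalue of $-\Delta$ on $\Om$ (or the first Dirichlet eigenvalue if $\gamma=0$), which is strictly positive because $\Om$ is bounded. For $\kappa<\kappa_\gamma$ the inequality is strict unless $m_1\equiv m_2\equiv 0$; the constraint $|\m|=1$ then forces $m_3\in\{\pm1\}$ a.e., equality in $|\nabla\m|^2\geq|\nabla m_1|^2+|\nabla m_2|^2$ forces $\nabla m_3\equiv 0$, and the connectedness of $\Om$ yields $\m\equiv+\e_3$ or $\m\equiv-\e_3$.

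Part (ii) is dual: $\Ekg(\e_\perp)=|\partial\Om|/\gamma^2$ and
\[
\Ekg(\m)-\frac{|\partial\Om|}{\gamma^2} \;=\; \int_{\Om}|\nabla\m|^2 + \kappa^2\!\int_{\Om}(\m\cdot\e_3)^2 - \frac{1}{\gamma^2}\!\int_{\partial\Om}(\m\cdot\e_3)^2.
\]
Using $|\nabla\m|^2\geq|\nabla(\m\cdot\e_3)|^2$ reduces the claim to the \emph{reverse} trace inequality
\[
\int_{\Om}|\nabla u|^2+\kappa^2\!\int_{\Om}u^2 \;\geq\; \frac{1}{\gamma^2}\!\int_{\partial\Om}u^2,
\]
applied to $u=\m\cdot\e_3$. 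Its optimal constant
\[
\mu_\kappa \;:=\; \inf_{u\in H^1(\Om),\,u|_{\partial\Om}\not\equiv 0} \frac{\int_{\Om}|\nabla u|^2+\kappa^2\!\int_{\Om}u^2}{\int_{\partial\Om}u^2}
\]
is strictly positive as soon as $\kappa>0$, thanks to the continuous trace embedding $H^1(\Om)\hookrightarrow L^2(\partial\Om)$ (bounding $\mu_\kappa$ from below by $\min(1,\kappa^2)$ times the reciprocal of the trace constant). Setting $\gamma_\kappa:=1/\sqrt{\mu_\kappa}$, for $\gamma>\gamma_\kappa$ the inequality is strict unless $\m\cdot\e_3\equiv 0$; equality in $|\nabla\m|^2\geq|\nabla(\m\cdot\e_3)|^2$ together with $\m\cdot\e_3\equiv 0$ then forces $\nabla m_1\equiv\nabla m_2\equiv 0$, so $\m$ is a constant in-plane unit vector.

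The arithmetic is elementary; the only real content is the positivity of the two thresholds, which are standard spectral facts (positivity of the first Robin/Dirichlet eigenvalue of $-\Delta$, respectively of the Steklov-type quotient with a bulk penalty), together with a careful bookkeeping of the equality cases to extract uniqueness. I do not expect any compactness or regularity obstacle. The main (modest) care is to phrase the statement uniformly in $\gamma$ so that the limiting Dirichlet case $\gamma=0$ in (i) is genuinely recovered as the $\gamma\to 0^+$ limit of the Robin problem, which amounts to using the right function space ($H^1_0$ versus $H^1$) in the definition of $\kappa_\gamma$.
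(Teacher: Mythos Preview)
Your proposal is correct and follows essentially the same approach as the paper: compute the energy difference $\Ekg(\m)-\Ekg(\m_\star)$, rewrite it via the sphere constraint so that only the ``transverse'' components of $\m$ appear, and reduce to a scalar Poincar\'e/trace inequality applied componentwise. The only notable difference is that the paper proves and uses an explicit Poincar\'e-type inequality with constant $c_\Om=N/\operatorname{diam}(\Om)$ (its Lemma~1) and the standard trace constant, whereas you invoke directly the first Robin eigenvalue and a Steklov-type quotient; your choice yields the sharp thresholds $\kappa_\gamma$ and $\gamma_\kappa$, while the paper's gives explicit (but suboptimal) bounds for them.
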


The statements in \Cref{prop:univconf} characterize the energy
landscape under restrictions on the control parameters
$\kappa$ and $\gamma$. Our second result retrieves information on the properties of 
minimizers under no additional assumptions on the system parameters
$\kappa$ and $\gamma$. Exploiting the symmetries of the system, we prove that
minimizers of $\Ekg$ are smooth up to the boundary of $\Om$ and have values in a quadrant of a meridian in \(\Stwo^2\).

In what follows, we denote by
\[ \Othreethree \assign \left\{ \sigma \in O (3) \st \sigma \left( \e_3
   \right) = \e_3 \text{ or } \sigma \left( \e_3 \right) = - \e_3 \right\} \]
the group of isometries preserving the $\e_3$-axis.

\begin{theorem}
  \label{structureMinimizers} Let $N\in\mathbb{N}^\ast$ and $\Om \subset \RR^N$  be a smooth bounded domain and let $\kappa,\gamma \in [0, + \infty)$. If $\m \in H^1 (\Om, \Stwo^2)$ is a global minimizer of $\Ekg$, then \(\m\in \mathcal C^\infty(\overline\Om, \Stwo^2)\), and there exist $\sigma \in \Othreethree$ and a lifting map $\varphi\in \mathcal C^\infty(\overline\Om)$ such that \(0\leqslant\varphi\leqslant\frac \pi 2\)  and 
    \[ \m = \sigma \circ (\sin \varphi, 0, \cos \varphi) \quad \text{\ in }
       \Om . \]
Moreover, when $\gamma>0$, we have
\begin{align}
 \label{phasePenalization} \varphi\in\argminE_{\psi\in H^1(\Omega)}\left\{ \int_{\Om}
    \abs{\nabla \psi}^2 + \kappa^2  \int_{\Om} \cos^2 \psi +
    \frac{1}{\gamma^2}  \int_{\partial \Om} \sin^2 \psi\right\},
    %&\quad\text{(\(\gamma>0\)),
\end{align}
and either \(\varphi\equiv 0\) in \(\overline \Om\) so that \(\m\equiv \pm\e_3\), or \(\varphi\equiv\frac\pi 2\) in \(\overline \Om\) so that \(\m\) is constant in-plane (i.e., $\m \cdot \e_3\equiv 0$), or \(0<\varphi<\frac\pi 2\) \ in $\overline \Om$.

In the case $\gamma=0$ we have
 \begin{align}
     \label{phaseDirichletProblem} \varphi\in\argminE_{\psi\in H_0^1(\Omega)}\left\{ \int_{\Om}
    \abs{\nabla \psi}^2 + \kappa^2  \int_{\Om} \cos^2 \psi\right\},
    %&\quad \text{($\gamma = 0$),}
\end{align}
and either \(\varphi\equiv 0\) in \(\overline \Om\) so that \(\m\equiv \pm\e_3\) or \(0<\varphi<\frac\pi 2\) \ in $ \Om$.
%  Moreover, \eqref{phaseDirichletProblem} has a \emph{unique} solution  valued in \((0,\frac\pi 2)\).
%  
%  {\color{red}
%  Antonin. There is a unique solution valued in \((0,\frac\pi 2)\), but may we have coexistence of the constant solution \(\pm \e_3\) and the non trivial solutions?
%  }
\end{theorem}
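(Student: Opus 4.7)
My plan is to reduce the vector-valued constrained minimization to an unconstrained scalar problem for a lifting with values in $[0,\pi/2]$. Given $\m=(m_1,m_2,m_3)\in H^1(\Om,\Stwo^2)$, I would set $r:=(m_1^2+m_2^2)^{1/2}$ and introduce the lifting $\tilde\varphi:=\arccos(\abs{m_3})\in H^1(\Om,[0,\pi/2])$, so that $\sin\tilde\varphi=r$ and $\cos\tilde\varphi=\abs{m_3}$. The pointwise Cauchy--Schwarz inequality $r^2\abs{\nabla r}^2=\abs{m_1\nabla m_1+m_2\nabla m_2}^2\leqslant r^2(\abs{\nabla m_1}^2+\abs{\nabla m_2}^2)$, together with the a.e.\ identity $\abs{\nabla\abs{m_3}}^2=\abs{\nabla m_3}^2$, yields the energy estimate
\[
\abs{\nabla\tilde\varphi}^2\;=\;\abs{\nabla r}^2+\abs{\nabla m_3}^2\;\leqslant\;\abs{\nabla\m}^2,
\]
with equality iff $m_1\nabla m_2=m_2\nabla m_1$ a.e. The remaining terms in $\Ekg(\m)$ depend only on $m_3^2$ and $r^2=1-m_3^2$, so this gives $\Ekg(\m)\geqslant F(\tilde\varphi)$, where $F(\psi):=\int_\Om(\abs{\nabla\psi}^2+\kappa^2\cos^2\psi)+\frac{1}{\gamma^2}\int_{\partial\Om}\sin^2\psi$. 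Conversely, for any $\psi\in H^1(\Om)$ the map $\vv_\psi:=(\sin\psi,0,\cos\psi)$ is an admissible $\Stwo^2$-valued competitor with $\Ekg(\vv_\psi)=F(\psi)$, so $\inf\Ekg=\inf F$. Consequently, if $\m$ minimizes $\Ekg$, then $\tilde\varphi$ minimizes $F$ over $H^1(\Om)$ (or $H_0^1(\Om)$ when $\gamma=0$) and the equality $m_1\nabla m_2=m_2\nabla m_1$ holds almost everywhere.

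Next, I would invoke standard elliptic regularity for the semilinear minimizer $\tilde\varphi$ of the smooth functional $F$ to obtain $\tilde\varphi\in\mathcal C^\infty(\overline\Om)$ satisfying the Euler--Lagrange equation
\[
-\Delta\tilde\varphi\;=\;\kappa^2\sin\tilde\varphi\cos\tilde\varphi\ \text{in }\Om,\qquad \partial_\n\tilde\varphi\;=\;-\tfrac{1}{\gamma^2}\sin\tilde\varphi\cos\tilde\varphi\ \text{on }\partial\Om.
\]
Since $\tilde\varphi\in[0,\pi/2]$, the right-hand side is non-negative, so $\tilde\varphi$ is superharmonic, and I would deduce the trichotomy $\tilde\varphi\equiv 0$, $\tilde\varphi\equiv\pi/2$, or $0<\tilde\varphi<\pi/2$ on $\overline\Om$ from the strong minimum principle combined with Hopf's lemma: if $\min\tilde\varphi=0$ is attained in the interior, the strong minimum principle forces $\tilde\varphi\equiv 0$; if it is attained only at a boundary point $x_0$, Hopf's lemma yields $\partial_\n\tilde\varphi(x_0)<0$, contradicting the boundary condition $\partial_\n\tilde\varphi(x_0)=0$. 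A symmetric argument for $\pi/2-\tilde\varphi$ handles the upper bound, and in the Dirichlet case $\gamma=0$ the constraint $\tilde\varphi=0$ on $\partial\Om$ automatically rules out $\tilde\varphi\equiv\pi/2$.

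It then remains to reconstruct the geometric form of $\m$ from $\tilde\varphi$. In the non-degenerate case $0<\tilde\varphi<\pi/2$, we have $r=\sin\tilde\varphi>0$ throughout $\overline\Om$, so the planar direction $\nu:=(m_1,m_2)/r\colon\overline\Om\to\Stwo^1$ is a well-defined smooth map; a direct computation using $r\nabla r=m_1\nabla m_1+m_2\nabla m_2$ together with the Cauchy--Schwarz equality gives $\nabla\nu\equiv 0$, and by connectedness of $\overline\Om$, $\nu\equiv(\cos\theta_0,\sin\theta_0)$ is constant. Moreover $\abs{m_3}=\cos\tilde\varphi>0$ forces $m_3$ to have constant sign on $\overline\Om$. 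Choosing $\sigma\in\Othreethree$ to be the rotation by $-\theta_0$ about $\e_3$, composed with the horizontal reflection $\e_3\mapsto-\e_3$ when $m_3<0$, produces $\m=\sigma\circ(\sin\varphi,0,\cos\varphi)$ with $\varphi:=\tilde\varphi$. In the degenerate cases $\tilde\varphi\equiv 0$ and $\tilde\varphi\equiv\pi/2$, the map $\m$ is constant and equal to $\pm\e_3$ or to a constant in-plane unit vector respectively, both of which trivially fit the claimed factorization; smoothness of $\m$ is then inherited from that of $\sigma$ and $\varphi$. The main delicate point will be the trichotomy step: combining the strong minimum principle with Hopf's lemma requires careful use of the precise form of the nonlinear Robin boundary condition, and the reconstruction crucially relies on the trichotomy to guarantee $r>0$ on all of $\overline\Om$ so that the planar direction $\nu$ is globally well-defined.
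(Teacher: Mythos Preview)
Your argument is essentially correct and follows a genuinely different route from the paper. The paper first establishes interior regularity of the vector-valued minimizer $\m$ via Schoen--Uhlenbeck harmonic map theory (using the symmetry $\m\mapsto(|m_1|,|m_2|,|m_3|)$ to force the image into a strictly convex subset of $\Stwo^2$, which is the key point in dimension $N\geqslant 3$), then applies the strong maximum principle componentwise to show that $\m$ is valued in a single quadrant of a meridian, and only afterwards lifts to a scalar $\varphi$. You bypass the harmonic map regularity entirely: by introducing the lifting $\tilde\varphi=\arccos|m_3|$ directly at the $H^1$ level and using the pointwise Cauchy--Schwarz inequality, you reduce immediately to a scalar semilinear problem whose regularity is standard, and you recover the meridian structure from the equality case $m_1\nabla m_2=m_2\nabla m_1$. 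This is more elementary and self-contained; the paper's approach, on the other hand, yields smoothness of $\m$ before any lifting and makes the role of the symmetry group $\Othreethree$ more transparent.

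Two small points to tighten. First, $\arccos$ is not Lipschitz near $1$, so the claim $\tilde\varphi\in H^1$ should be justified by viewing $\tilde\varphi$ as a Lipschitz function of the pair $(r,|m_3|)$ on the quarter circle (the arc-length parametrization is bi-Lipschitz), rather than of $|m_3|$ alone. Second, your ``symmetric argument'' for the upper barrier $\tilde\varphi=\tfrac\pi2$ is not literally symmetric: $\psi:=\tfrac\pi2-\tilde\varphi$ is subharmonic, not superharmonic, so the strong minimum principle does not apply directly. One must instead write the equation for $u:=\tilde\varphi-\tfrac\pi2\leqslant 0$ in the form $\Delta u - c(x)u=0$ with $c(x)=\kappa^2\frac{\sin(2u)}{2u}\in[0,\kappa^2]$ and invoke the version of the strong maximum principle and Hopf lemma valid for $c\geqslant 0$ when $u\leqslant 0$ attains the value $0$; this is exactly how the paper handles it.
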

When $\gamma=0$, the minimization problem \eqref{phaseDirichletProblem} has a unique solution such that \(\operatorname{Im}(\varphi)\subset (0,\frac\pi 2]\). This follows by classical results about sublinear elliptic equations  which immediately apply to the Dirichlet problem \eqref{phaseDirichletProblem} because the map \(t\mapsto (\sin t)/t\) is decreasing in \((0,\pi]\) (see Appendix II in \cite{brezisSublinearEllipticEquations1992}). Note, however, that the argument does not assure uniqueness of solutions because, in principle, there is the possibility of having coexistence of a solution \(\varphi\) such that \(\operatorname{Im}(\varphi)\subset (0,\frac\pi 2]\) with the constant solution \(\varphi\equiv 0\) (corresponding to \(\m\equiv \pm \e_3\)). More importantly, these classical results do not to fit with the case $\gamma >0$, i.e., do not apply to minimizers of \eqref{phasePenalization}. Our next result resolves these issues as it assures uniqueness of minimizers of \eqref{phaseDirichletProblem} and \eqref{phasePenalization} up to isometries in \(\Othreethree\).

\begin{theorem}\label{corollaryDirichlet}
Let $N\in\mathbb{N}^\ast$, let $\Om \subset \RR^N$ be a smooth bounded domain, and let \(\kappa\in[0,+\infty)\), $\gamma\in[0,+\infty)$. If \(\m\) and \(\Bar\m\) are two minimizers of the energy $\mathcal{E}_{\kappa,\gamma}$, then there exists \(\sigma\in\Othreethree\) such that \(\Bar\m=\sigma\circ\m\).
\end{theorem}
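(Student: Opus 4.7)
The plan is to apply Theorem 2 to reduce the theorem to a scalar uniqueness statement, and then to prove the latter by a hidden-convexity argument via the change of variable $\rho := \cos^2\varphi$. If $\m,\bar\m$ are two minimizers of $\Ekg$, Theorem 2 supplies $\sigma,\bar\sigma\in\Othreethree$ and smooth liftings $\varphi,\bar\varphi\in\mathcal{C}^\infty(\overline\Omega,[0,\pi/2])$ such that $\m=\sigma\circ(\sin\varphi,0,\cos\varphi)$ and $\bar\m=\bar\sigma\circ(\sin\bar\varphi,0,\cos\bar\varphi)$, with each lifting minimizing \eqref{phasePenalization} (or \eqref{phaseDirichletProblem} when $\gamma=0$). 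Once $\varphi\equiv\bar\varphi$ is established, the conclusion follows from $\bar\m=(\bar\sigma\circ\sigma^{-1})\circ\m$ and $\bar\sigma\circ\sigma^{-1}\in\Othreethree$.

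To prove uniqueness of $\varphi$, I set $\rho:=\cos^2\varphi\in\mathcal{C}^\infty(\overline\Omega,[0,1])$ and use the identity $|\nabla\rho|^2=4\rho(1-\rho)|\nabla\varphi|^2$ to recast the scalar energy as
\begin{equation*}
E(\rho)=\int_\Omega|\nabla\sqrt{\rho}|^2+\int_\Omega|\nabla\sqrt{1-\rho}|^2+\kappa^2\int_\Omega\rho+\frac{1}{\gamma^2}\int_{\partial\Omega}(1-\rho),
\end{equation*}
with the boundary integral dropped and replaced by the pointwise constraint $\rho\equiv 1$ on $\partial\Omega$ in the Dirichlet case $\gamma=0$. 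The Benguria--Brezis--Lieb convexity inequality---a consequence of the joint convexity of $(z,w)\mapsto|z|^2/w$ on $\RR^N\times(0,+\infty)$---shows that the two kinetic terms $\rho\mapsto\int|\nabla\sqrt{\rho}|^2$ and $\rho\mapsto\int|\nabla\sqrt{1-\rho}|^2$ are convex on $[0,1]$-valued $H^1$-functions, while the remaining bulk and boundary contributions are linear in $\rho$. Hence $E$ is a convex functional of $\rho$.

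Given two minimizers $\rho_1,\rho_2$ of $E$, convexity together with minimality forces $E$ to be affine along the segment $[\rho_1,\rho_2]$, i.e., strictness of Benguria--Brezis--Lieb must fail for both kinetic terms. Inspecting its equality case---which requires $\rho_1\propto\rho_2$ and $(1-\rho_1)\propto(1-\rho_2)$ on the relevant supports---one deduces either $\rho_1\equiv\rho_2$ or both $\rho_i$ constant. The constant case is constrained by Theorem 2 to $\varphi\in\{0,\pi/2\}$, the only constants consistent with the Euler--Lagrange relation $\sin(2\varphi)\equiv 0$. The degenerate coexistence of $\rho_1\equiv 1$ and $\rho_2\equiv 0$ as minimizers is then ruled out by another application of convexity: every intermediate constant $\rho_t\equiv t\in(0,1)$ would also be a minimizer of $E$, yet the associated constant $\varphi_t\equiv\arccos\sqrt{t}\in(0,\pi/2)$ fails the Euler--Lagrange equation and thus cannot be a critical point of $E$, a contradiction.

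The main obstacle will be the rigorous invocation of the strict form of Benguria--Brezis--Lieb in the presence of the possibly vanishing weights $\rho$ and $1-\rho$ in the kinetic energy, along with a sharp identification of its equality case pinning down when proportionality collapses to equality. These technicalities should be manageable owing to the $\mathcal{C}^\infty$ regularity of minimizers provided by Theorem 2 together with the trichotomy it supplies for the range of $\varphi$ on $\overline\Omega$, which confines the analysis either to the strict-interior case $0<\varphi<\pi/2$ or to the explicit boundary values $\varphi\in\{0,\pi/2\}$.
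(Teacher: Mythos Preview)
Your approach is correct and genuinely different from the paper's. The paper does not pass to the scalar problem and then use hidden convexity; instead it proves two ``Hardy--trick'' lemmas at the vectorial level. For $\gamma=0$ it shows that if a critical point $\m$ has $m_1>0$ in $\Omega$, then for every admissible $\vv\in H^1_0$ one has
\[
\mathcal{E}_{\kappa}(\m+\vv)-\mathcal{E}_{\kappa}(\m)\geqslant \int_\Omega m_1^2\,\Bigl|\nabla\Bigl(\frac{\vv}{m_1}\Bigr)\Bigr|^2+\kappa^2\int_\Omega(\vv\cdot\e_3)^2,
\]
by writing $\vv=m_1\tmmathbf{u}$ and using the Euler--Lagrange equation for $m_1$ (a ground--state substitution). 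For $\gamma>0$ an analogous identity with both weights $m_1$ and $m_3$ yields $\Ekg(\m+\vv)-\Ekg(\m)=\int_\Omega m_1^2|\nabla\tmmathbf{u}^\perp|^2+m_3^2|\nabla u_3|^2$. In each case equality forces $\vv/m_1$ (and $v_3/m_3$) to be constant, and the constraint $|\m+\vv|=1$ then pins that constant to an element of $\Othreethree$. The two routes are close cousins---Picone's identity is precisely the infinitesimal version of the convexity of $\rho\mapsto\int|\nabla\sqrt{\rho}|^2$---but they are organized differently: the paper works directly with $\m$ and obtains, as a by-product, uniqueness among \emph{critical points} satisfying $m_1>0$ (resp.\ $m_1,m_3>0$), not only among minimizers; your argument first uses the scalar reduction from Theorem~2 and then gets uniqueness of minimizers cleanly from convexity, with the trichotomy in Theorem~2 doing exactly the work you anticipate in handling the degenerate weights. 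The only point where you should be a little more explicit is checking that for each relevant pair $(\rho_1,\rho_2)$ the interpolant $\varphi_t=\arccos\sqrt{t\rho_1+(1-t)\rho_2}$ is an admissible competitor in $H^1$ (resp.\ $H^1_0$); this is indeed guaranteed by the trichotomy (which keeps $\rho_i$ bounded away from $0$ in $\overline\Omega$, and from $1$ in $\overline\Omega$ when $\gamma>0$, while in the Dirichlet case the pointwise convexity of $|z|^2/w$ controls $|\nabla\rho_t|^2/(1-\rho_t)$ by the corresponding quantities for $\rho_1,\rho_2$).
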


\begin{figure}[t]
  \scalebox{0.8}{\includegraphics{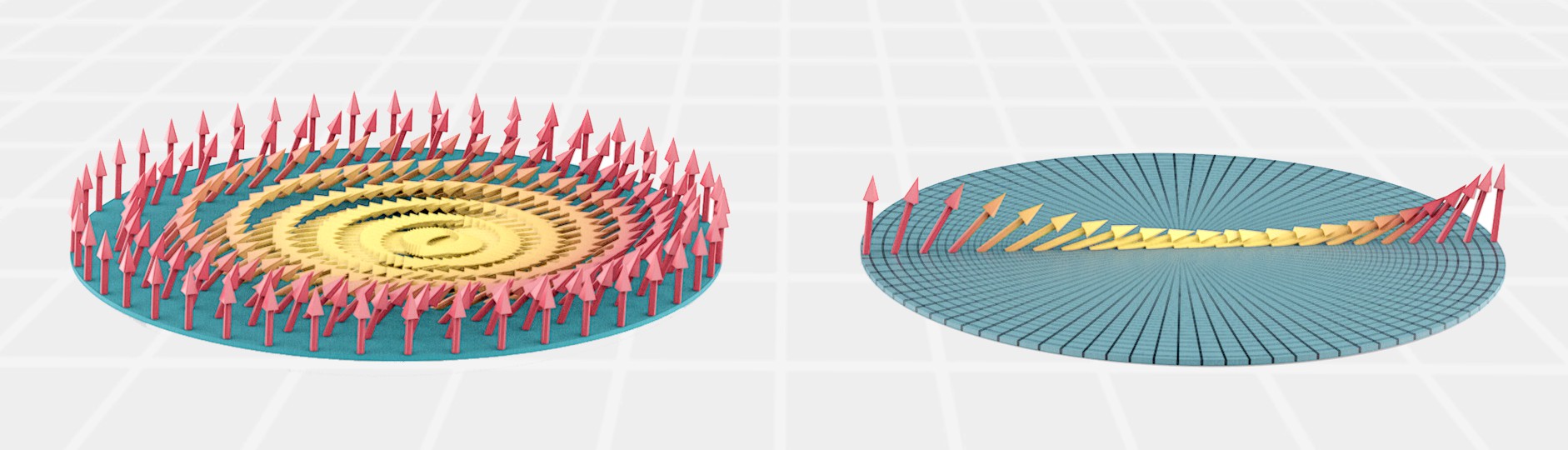}}
  \caption{\label{fig:1}On the left, a minimizer of $\Ekg$, with $\kappa^2 =
  5$, $\gamma = 0.1$, in the unit disk of $\RR^2$. On the right, we isolated a ray in order to visualize the profile of the minimizer better.
  }
\end{figure}

We also have a comparison principle for solutions with different values of \(\kappa,\gamma\):
\begin{theorem}
\label{theoremComparison}
Let $N\in\mathbb{N}^\ast$, \(\Om\subset\RR^N\) be a smooth bounded domain, and \(\kappa_1,\kappa_2,\gamma_1,\gamma_2\in [0,+\infty)\) with \(\kappa_1\leqslant\kappa_2\), \(\gamma_1\leqslant\gamma_2\) and \((\kappa_1,\gamma_1)\neq (\kappa_2,\gamma_2)\). If for \(i=1,2\), \(\varphi_i \) is a solution of \eqref{phasePenalization} valued into \([0,\frac\pi 2]\), with \((\kappa,\gamma)=(\kappa_i,\gamma_i)\), then we have either \(\varphi_1=\varphi_2\equiv 0\), or \(\varphi_1=\varphi_2\equiv \frac\pi 2\), or \(\varphi_1<\varphi_2\) in \(\Omega\). If in addition we assume \(\gamma_2>0\), then in the latter case we actually have  \(\varphi_1<\varphi_2\) on \(\overline\Omega\).
\end{theorem}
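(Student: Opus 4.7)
The plan is to first establish the weak comparison \(\varphi_1\leqslant\varphi_2\) by a min/max competitor argument combined with the uniqueness of the phase from \Cref{corollaryDirichlet}, then promote it to a strict inequality using the strong maximum principle and Hopf's lemma on the Euler--Lagrange equations.

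The first step is to introduce the admissible competitors \(v\assign\min(\varphi_1,\varphi_2)\) and \(V\assign\max(\varphi_1,\varphi_2)\), both valued in \([0,\frac\pi 2]\) and with traces compatible with the two phase problems. Denoting by \(J_i\) the functional in \eqref{phasePenalization} with parameters \((\kappa_i,\gamma_i)\), the pointwise identity \(\{v,V\}=\{\varphi_1,\varphi_2\}\) makes the Dirichlet terms cancel, while the potential and boundary contributions rearrange on \(A\assign\{\varphi_1>\varphi_2\}\) into
\[
J_1(v)+J_2(V)-J_1(\varphi_1)-J_2(\varphi_2)=(\kappa_2^2-\kappa_1^2)\int_{A}(\cos^2\varphi_1-\cos^2\varphi_2)+\Bigl(\tfrac{1}{\gamma_2^2}-\tfrac{1}{\gamma_1^2}\Bigr)\int_{A\cap\partial\Om}(\sin^2\varphi_1-\sin^2\varphi_2).
\]
Because \(\cos^2\) is non-increasing and \(\sin^2\) is non-decreasing on \([0,\frac\pi 2]\), both terms on the right-hand side are non-positive under the assumptions \(\kappa_1\leqslant\kappa_2\) and \(\gamma_1\leqslant\gamma_2\). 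Combining this with the minimality of \(\varphi_1\) and \(\varphi_2\) forces all inequalities to be equalities, so that \(v\) minimizes \(J_1\) and \(V\) minimizes \(J_2\). Since \Cref{corollaryDirichlet} implies that the phase \(\varphi=\arccos\abs{\m\cdot\e_3}\) valued in \([0,\frac\pi 2]\) is \(\Othreethree\)-invariant and therefore unique among minimizers, this yields \(v=\varphi_1\) and \(V=\varphi_2\), i.e.\ \(\varphi_1\leqslant\varphi_2\) in \(\overline\Om\).

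The second step is to set \(W\assign\varphi_2-\varphi_1\geqslant 0\) and, using the bulk Euler--Lagrange equations \(-\Delta\varphi_i=\frac{\kappa_i^2}{2}\sin(2\varphi_i)\) together with the mean value theorem, to derive
\[
-\Delta W-\kappa_2^2\cos(2\xi)\,W=\frac{\kappa_2^2-\kappa_1^2}{2}\sin(2\varphi_1)\geqslant 0\quad\text{in }\Om,
\]
for some measurable \(\xi\) with \(\varphi_1\leqslant\xi\leqslant\varphi_2\). After an additive shift to make the bounded zeroth-order coefficient non-negative, the strong maximum principle gives the dichotomy \(W\equiv 0\) in \(\Om\) or \(W>0\) in \(\Om\). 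If \(W\equiv 0\) and \(\kappa_1<\kappa_2\), subtracting the two bulk equations forces \(\sin(2\varphi_1)\equiv 0\), so by continuity \(\varphi_1\equiv 0\) or \(\varphi_1\equiv\frac\pi 2\) on \(\overline\Om\). If \(W\equiv 0\) and \(\kappa_1=\kappa_2\) (hence \(0\leqslant\gamma_1<\gamma_2\)), comparing the Robin (or Dirichlet) conditions of \(\varphi_1\) and \(\varphi_2\) forces \(\sin(2\varphi)=0\) on \(\partial\Om\); Hopf's lemma applied to the superharmonic \(\varphi\) (if the boundary trace is \(0\)) or to the subharmonic \(\frac\pi 2-\varphi\) (if it is \(\frac\pi 2\)), together with \(\partial_\n\varphi_2=0\) on \(\partial\Om\), then yields \(\varphi\equiv 0\) or \(\varphi\equiv\frac\pi 2\) on \(\overline\Om\).

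The final step handles the remaining case \(W>0\) in \(\Om\) under the extra assumption \(\gamma_2>0\). Supposing for contradiction that \(W(x_0)=0\) at some \(x_0\in\partial\Om\), Hopf's lemma applied to the shifted supersolution satisfied by \(W\) yields the strict inequality \(\partial_\n W(x_0)<0\), whereas evaluating the Robin conditions of \(\varphi_1\) and \(\varphi_2\) (or, when \(\gamma_1=0\), applying Hopf directly to \(\varphi_1\geqslant 0\) at \(x_0\)) gives \(\partial_\n W(x_0)\geqslant 0\), the desired contradiction. The main obstacle throughout is tracking the correct signs in the first step so that the competitor pair \((v,V)\) genuinely decreases the combined energy, and then treating the degenerate sub-case \(W\equiv 0\), where the incompatibility between the two sets of Euler--Lagrange conditions saturates the common phase to \(0\) or \(\frac\pi 2\).
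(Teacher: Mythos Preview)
Your proof is correct and follows the same overall strategy as the paper: a min/max competitor argument for the weak comparison \(\varphi_1\leqslant\varphi_2\), followed by the strong maximum principle and Hopf's lemma for the strict inequality. The second and third steps of your argument match the paper's Steps~4--5 almost verbatim.

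There is one genuine difference worth recording. In the paper, the energy estimate from the competitors only yields \(\varphi_1\leqslant\varphi_2\) directly when \(\kappa_1<\kappa_2\); the case \(\kappa_1=\kappa_2\), \(\gamma_1<\gamma_2\) requires a separate Step~2 in which the equalities \(\mathcal{E}_\kappa(\varphi_1)=\mathcal{E}_\kappa(\varphi_1\wedge\varphi_2)\) are combined with the external Brezis--Kamin uniqueness result for sublinear Dirichlet problems. Your route is shorter: from \(J_1(v)+J_2(V)\leqslant J_1(\varphi_1)+J_2(\varphi_2)\) and minimality you obtain that \(v\) is a minimizer of \(J_1\), and then \Cref{corollaryDirichlet} (already proved in Section~4) gives \(v=\varphi_1\) in one stroke, without splitting into cases and without invoking the Brezis--Kamin result. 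This is a nice simplification that the paper does not exploit.

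Two minor points to tidy up. First, your displayed identity contains \(1/\gamma_1^2\), which is meaningless when \(\gamma_1=0\); you should remark (as the paper does) that in that case \(\varphi_1\equiv 0\) on \(\partial\Om\) forces \(A\cap\partial\Om=\emptyset\), so the boundary contribution simply disappears. Second, in the sub-case \(W\equiv 0\) with boundary value \(\frac\pi 2\), calling \(\frac\pi 2-\varphi\) ``subharmonic'' and invoking Hopf at a \emph{minimum} is not quite the standard setup: you should instead write the equation for \(u=\varphi-\frac\pi 2\leqslant 0\) in the form \(\Delta u+c(x)u=0\) with \(c\leqslant 0\) and apply Hopf at the boundary maximum \(u(x_0)=0\), exactly as in the proof of \Cref{structureMinimizers}.
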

The statements in \Cref{prop:univconf,structureMinimizers,corollaryDirichlet,theoremComparison} hold without any assumption on the
geometry of the domain $\Om$. Our last result focuses on the case where $\Om$ is a ball, and shows that any global minimizer of $\Ekg$ is radially symmetric in this case, i.e., $\m = \m (|x|)$, and by Theorem~\ref{structureMinimizers} has values in a quadrant of a meridian (an example of a minimizer of \eqref{phasePenalization} on the two-dimensional ball is illustrated in \Cref{fig:1}).

\begin{theorem}
  \label{prop:symm}Let $\kappa,\gamma \in [0, + \infty)$. Let $\Om=B_R$ be a ball of radius \(R>0\) centered at the origin in \(\RR^N\), then any global minimizer \(\m\) of $\Ekg$ is radially symmetric. More precisely, there exists $\sigma \in \Othreethree$ such that
  \[ \sigma\circ\m (x) =  \Bigl(\sin \Bigl(\frac{u(\abs{x})}{2}\Bigr),0, \cos \Bigl(\frac{u(\abs{x})}{2}\Bigr)\Bigr) \quad \text{in } \Om 
\]
for some non-increasing \(\mathcal{C}^\infty\) function \(u:[0,R]\to[0,\pi]\) which solves the nonlinear ODE
\begin{align}\label{equationELradial}
&u''(r)+\frac{N-1}{r}\, u'+\kappa^2\sin u=0\quad\text{in \((0,R)\)},
\\
\label{boundaryConditionELleft}
&u'(0)=0,
\end{align} 
with either a Dirichlet condition or a nonlinear Robin boundary condition at \(r=R\), namely
\begin{equation}\label{boundaryConditionEL}
\begin{cases}
u(R)=0&\text{if \(\gamma=0\),}\\
u'(R)+\frac{1}{\gamma^2}\sin u(R)=0&\text{if \(\gamma>0\).}
\end{cases}
\end{equation}
\end{theorem}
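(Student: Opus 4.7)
The strategy is to reduce to a scalar one-dimensional problem via \Cref{structureMinimizers}, then combine Schwarz symmetrization with the uniqueness result of \Cref{corollaryDirichlet} to conclude radial symmetry and monotonicity; the ODE will follow by a direct computation in radial coordinates from the scalar Euler--Lagrange equation.

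By \Cref{structureMinimizers}, up to composition with an isometry in $\Othreethree$ (which is reabsorbed into the final $\sigma$ in the statement), I may assume any minimizer has the form $\m = (\sin \varphi, 0, \cos \varphi)$ with $\varphi \in \mathcal{C}^\infty(\overline{B_R})$, $0 \leqslant \varphi \leqslant \pi/2$, where $\varphi$ minimizes
\[
F(\psi) \assign \int_{B_R} \abs{\nabla \psi}^2 + \kappa^2 \int_{B_R} \cos^2 \psi + \frac{1}{\gamma^2} \int_{\partial B_R} \sin^2 \psi
\]
(with the boundary term omitted and the constraint $\psi \in H_0^1(B_R)$ imposed when $\gamma = 0$). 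It then suffices to prove that $\varphi$ is radial and non-increasing: writing $\varphi(x) = \tilde\varphi(\abs{x})$ and setting $u(r) := 2 \tilde \varphi(r)$ yields a function $u : [0, R] \to [0, \pi]$ with the desired regularity and monotonicity.

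For radial monotonicity, I consider the symmetric decreasing rearrangement $\varphi^*$ of $\varphi$ on $B_R$, which still takes values in $[0, \pi/2]$. The P\'olya--Szeg\H{o} inequality gives $\int_{B_R} \abs{\nabla \varphi^*}^2 \leqslant \int_{B_R} \abs{\nabla \varphi}^2$, and equimeasurability yields $\int_{B_R} \cos^2 \varphi^* = \int_{B_R} \cos^2 \varphi$. The delicate term is the boundary integral: since $\varphi^*$ is continuous, radial, and non-increasing, its trace on $\partial B_R$ equals $m := \min_{\overline{B_R}} \varphi$, while $\varphi \geqslant m$ on $\partial B_R$ by definition of $m$. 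Because $\sin^2$ is non-decreasing on $[0, \pi/2]$, this forces $\sin^2 \varphi^* \leqslant \sin^2 \varphi$ pointwise on $\partial B_R$ (in the Dirichlet case $\gamma = 0$ one has $m = 0$, and $\varphi^* \in H_0^1(B_R)$ is automatic). Summing the three estimates yields $F(\varphi^*) \leqslant F(\varphi)$, so $\varphi^*$ is itself a minimizer of $F$.

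Setting $\m^* := (\sin \varphi^*, 0, \cos \varphi^*)$, the identity $\Ekg(\m^*) = F(\varphi^*)$ shows that $\m^*$ is also a global minimizer of $\Ekg$, so \Cref{corollaryDirichlet} provides $\tau \in \Othreethree$ with $\m = \tau \circ \m^*$. Since $\tau(\e_3) = \pm \e_3$, taking the $\e_3$-component gives $\cos \varphi = \pm \cos \varphi^*$; as both phases lie in $[0, \pi/2]$ where $\cos$ is non-negative and injective, this forces $\varphi = \varphi^*$, hence $\varphi$ is radial and non-increasing. The ODE, the Robin or Dirichlet boundary condition at $r = R$, and the condition $u'(0) = 0$ then follow by a direct computation in radial coordinates from the scalar Euler--Lagrange equation $\Delta \varphi + \tfrac{\kappa^2}{2} \sin(2 \varphi) = 0$ (together with its boundary counterpart), using $u = 2 \tilde \varphi$ and $\sin(2 \varphi) = \sin u$. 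The main obstacle is precisely the boundary integral under symmetrization: classical Schwarz rearrangement does not decrease surface integrals in general, and the argument here relies crucially on the range restriction $\varphi \in [0, \pi/2]$ supplied by \Cref{structureMinimizers} together with the fact that $\varphi^*$ attains its minimum on $\partial B_R$; the usually delicate equality case in P\'olya--Szeg\H{o} is entirely bypassed by invoking \Cref{corollaryDirichlet}.
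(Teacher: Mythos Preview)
Your argument has a genuine gap in the case $\gamma>0$: the P\'olya--Szeg\H{o} inequality for the symmetric decreasing rearrangement on a ball does \emph{not} hold for functions that fail to vanish on the boundary. A one-dimensional example already shows this: take $\varphi(x)=x+1$ on $(-1,1)$, so that $\int_{-1}^1|\varphi'|^2=2$; its symmetric decreasing rearrangement on $(-1,1)$ is $\varphi^*(x)=2-2|x|$, with $\int_{-1}^1|(\varphi^*)'|^2=8>2$. In your situation with $\gamma>0$, \Cref{structureMinimizers} gives $0<\varphi<\frac\pi 2$ on $\overline{B_R}$ (unless $\varphi$ is constant), so $\varphi$ is strictly positive on $\partial B_R$ and there is no extension to $\RR^N$ that lands in $H^1$ and to which the classical P\'olya--Szeg\H{o} argument applies. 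Your boundary-term estimate is fine, and the idea of using \Cref{corollaryDirichlet} to avoid the equality case of P\'olya--Szeg\H{o} is nice, but the Dirichlet-energy step is unjustified precisely in the case that matters.

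The paper avoids this obstacle by not using Schwarz symmetrization at all. Instead it applies the reflection method of Lopes: for every hyperplane $H$ through the origin one keeps the ``better half'' of $\varphi$ and reflects it across $H$; the resulting competitor has no more energy (the Dirichlet integral is exactly preserved under reflection, and the boundary term is handled by the choice of half), hence is again a minimizer and solves the same elliptic equation; the unique continuation principle then forces $\varphi=\varphi\circ\sigma_H$. This yields radiality without any rearrangement inequality, and only then is a one-dimensional monotone rearrangement $u^\ast(r)=\sup_{s\in[r,R]}u(s)$ used to get monotonicity (a step that has no boundary issue because $u^\ast(R)=u(R)$). Note also that the paper's proof does not invoke \Cref{corollaryDirichlet}.

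For $\gamma=0$ your argument is correct: there $\varphi\in H^1_0(B_R)$, P\'olya--Szeg\H{o} applies, and your use of \Cref{corollaryDirichlet} to bypass the equality case is a clean alternative to the Gidas--Ni--Nirenberg route.
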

By \Cref{corollaryDirichlet}, the function \(u\) in \Cref{prop:symm} is unique when \(\gamma \in [0,+\infty)\). It is either the steady state \(u\in  \{ 0, \pi\}\) or a decreasing function into \((0,\pi)\).

%%{\color{red} If we cannot prove uniqueness for $\Ekg$ in general, maybe here we can prove uniqueness for an ODE?}

\subsection{Outline}The paper is organized as follows. In Section~\ref{sec:2},
we prove the minimality of universal configurations in a specific range of the parameters \(\kappa,\gamma\)
(\Cref{prop:univconf}). For that, we need a Poincar{\'e}-type inequality
with a boundary term, which is proved in \Cref{prop:PIR}.
Section~\ref{sec:3} is devoted to the analysis of symmetries of the minimizers
and their range, there we prove \Cref{structureMinimizers}. In
Section~\ref{sec:4}, we show the uniqueness of minimizers of $\Ekg$ for $\gamma \in [0, +\infty)$ up to isometries in \(\Othreethree\), see \Cref{corollaryDirichlet}. In Section~\ref{sec:compsols} we prove our comparison result (Theorem~\ref{theoremComparison}) which states that solutions of \eqref{phasePenalization} are order preserving in $\kappa$ and $\gamma$. Finally, in
Section~\ref{sec:5}, we focus on the case when the domain is a ball, and
we prove radial symmetry and monotonicity of solutions of \eqref{phasePenalization} (\Cref{prop:symm}).

\section{Minimality of universal configurations:
\texorpdfstring{Proof of \Cref{prop:univconf}}{}}\label{sec:2}

To investigate the minimality of the constant out-of-plane configurations $\pm
\tmmathbf{e}_3$ we need the following Poincar{\'e}-type inequality, which can
be of some interest on its own.

\begin{lemma}[Poincar{\'e}-type inequality]
  \label{prop:PIR}Let $\Om \subseteq \RR^N$ be a bounded smooth domain. Then, there exists $c_{\Om} > 0$ such that for every $u \in H^1 \left(\Om \right)$ and every $\delta > 0$, we have
  \begin{equation}
    \delta \left( c_{\Om} - \delta \right) \int_{\Om} u^2 (x) \mathd x \;
    \leqslant \int_{\Om} \left| \grad u (x) \right|^2 \mathd x + \delta
    \int_{\partial \Om} u^2 (x) \mathd\mathcal{H}^{N-1}(x). \label{eq:PINB}
  \end{equation}
  Moreover, in the previous relation, the constant $c_{\Om}$ can be taken $c_{\Om}= \frac{N}{\tmop{diam} \left( \Om \right)}$.
\end{lemma}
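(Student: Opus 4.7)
The plan is to extract the inequality from an integration-by-parts identity with a radial-type vector field, followed by a Young inequality to trade the $L^2$ mass on $\Omega$ for gradient and boundary contributions. More precisely, I would fix any $x_0\in\overline\Omega$ and apply the divergence theorem to the vector field $u^2(x-x_0)$: since $\operatorname{div}(x-x_0)=N$, one obtains the pointwise identity $\operatorname{div}(u^2(x-x_0))=Nu^2+2u\,\nabla u\cdot(x-x_0)$, and integration on $\Omega$ yields
\begin{equation*}
N\int_\Omega u^2\,\mathd x \;=\; \int_{\partial\Omega} u^2\,(x-x_0)\cdot\n\,\mathd\mathcal{H}^{N-1} \;-\; 2\int_\Omega u\,\nabla u\cdot(x-x_0)\,\mathd x.
\end{equation*}

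Next I would bound $|x-x_0|\leqslant\operatorname{diam}(\Omega)\assign d$ uniformly on $\overline\Omega$, which controls the boundary normal component by $d$ and the vector field in the volume term by $d$ as well. This gives, with $d=\operatorname{diam}(\Omega)$,
\begin{equation*}
N\int_\Omega u^2 \;\leqslant\; d\int_{\partial\Omega} u^2\,\mathd\mathcal{H}^{N-1} \;+\; 2d\int_\Omega |u|\,|\nabla u|\,\mathd x.
\end{equation*}
Then I would apply Young's inequality $2|u||\nabla u|\leqslant \delta u^2+\delta^{-1}|\nabla u|^2$ with the parameter $\delta>0$ appearing in the statement, absorb the $\delta u^2$ term on the left, and finally multiply through by $\delta/d$ to get exactly
\begin{equation*}
\delta\Bigl(\tfrac{N}{d}-\delta\Bigr)\int_\Omega u^2 \;\leqslant\; \int_\Omega|\nabla u|^2 \;+\; \delta\int_{\partial\Omega} u^2,
\end{equation*}
which is the claimed bound with $c_\Omega=N/\operatorname{diam}(\Omega)$.

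No step is genuinely hard here; the only point requiring a little care is the choice of the base point $x_0$ for the vector field $x-x_0$, which is used solely to ensure that $|x-x_0|$ is controlled uniformly by $\operatorname{diam}(\Omega)$ on $\overline\Omega$ (rather than, say, by $\sup_{x\in\Omega}|x|$, which would require $0\in\overline\Omega$). A standard density argument reduces the identity to $u\in\mathcal C^1(\overline\Omega)$ and then extends it to $u\in H^1(\Omega)$ with traces in $L^2(\partial\Omega)$, which is well defined since $\Omega$ is smooth.
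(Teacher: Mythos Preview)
Your proposal is correct and follows essentially the same approach as the paper: apply the divergence theorem to $u^2(x-x_0)$ (the paper takes $x_0=0$ after assuming $0\in\Omega$), bound $|x-x_0|$ by $\operatorname{diam}(\Omega)$, use Young's inequality with parameter $\delta$, and rearrange. The only cosmetic difference is that you keep $x_0\in\overline\Omega$ arbitrary rather than translating the domain.
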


\begin{proof}
  We argue along the lines in \cite{Dif2021}. Without loss of generality, we can assume that $0 \in \Om$. Also, by
  density, it is sufficient to prove {\eqref{eq:PINB}} for every $u \in
  \mathcal C^{\infty} \left( \overline{\Om} \right)$. By the divergence theorem we have
  \[ \int_{\Om} \left( 2 u (x) \grad u (x) \cdot x + N u^2 (x) \right) \mathd
     x \; \eqs \; \int_{\Om} \mathrm{{\divv}} [u^2 (x) x] \mathd x \; \eqs \;
     \int_{\partial \Om} u^2 (x) \n(x)\cdot x\, \mathd\mathcal{H}^{N-1}( x), \]
   where, for a.e.\ $x \in \partial \Om$, we denoted by $\n (x)$ the outward-point unit normal at $x \in \partial \Om$. By Young's inequality, it follows that for every $\delta > 0$ one has
  \begin{equation}
    N \int_{\Om} u^2 (x) \mathd x  \leqslant \sup_{x \in \partial \Om} | x
    |_{} \int_{\partial \Om} u^2 (x) \mathd\mathcal{H}^{N-1}(x) + \sup_{x \in \Om} | x |_{}
    \int_{\Om} \left[ \frac{1}{\delta} \left| \grad u (x) \right|^2 + \delta
    u^2 (x) \right] \mathd x. \nonumber
  \end{equation}
  Since $\sup_{x \in \partial \Om} | x | \leqslant \mathrm{\tmop{diam}} \left(
  \Om \right)$ and $\sup_{x \in \Om} | x | \leqslant \mathrm{\tmop{diam}} \left(
  \Om \right)$, we have
  \begin{equation}
    \left( N - \delta \mathrm{\, \tmop{diam}} \left( \Om \right) \right)
    \int_{\Om} u^2 (x) \mathd x  \leqslant 
    \frac{\mathrm{\mathrm{\tmop{diam}} \left( \Om \right)}}{\delta} \int_{\Om}
    \left| \grad u (x) \right|^2 \mathd x + \mathrm{\mathrm{\tmop{diam}}
    \left( \Om \right)} \int_{\partial \Om} u^2 (x) \mathd\mathcal{H}^{N-1}(x) . \nonumber
  \end{equation}
  From the previous estimate, we get that for every $\delta > 0$ there holds
  \[ \delta \left( c_{\Om} - \delta \right) \int_{\Om} u^2 (x) \mathd x \;
     \leqslant \int_{\Om} \left| \grad u (x) \right|^2 \mathd x + \delta
     \int_{\partial \Om} u^2 (x) \mathd\mathcal{H}^{N-1}(x), \]
  with $c_{\Om} \assign \frac{N}{\tmop{diam} \left( \Om \right)}$. This
  concludes the proof.
\end{proof}

\begin{proof}[Proof of \Cref{prop:univconf}, \cref{universal_outofplane}] We first consider the case where \(\gamma>0\). Without loss of generality, we can focus on the configuration $\m = + \e_3$.
  We observe that for any $\vv \in H^1 (\Om, \RR^3)$ such that $| \vv
  +\tmmathbf{e}_3 | = 1$ or, equivalently, such that $\left| \vv \right|^2 = -
  2 \left( \vv \cdot \tmmathbf{e}_3 \right)$, we have
  \begin{align}
    \Ekg  (\tmmathbf{e}_3 + \vv) - \Ekg (\tmmathbf{e}_3) & \eqs  \int_{\Om}
    \left| \grad \vv \right|^2 + \kappa^2  \int_{\Om} \left( \vv \cdot
    \tmmathbf{e}_3 \right)^2 + 2 \left( \vv \cdot \tmmathbf{e}_3 \right) +
    \frac{1}{\gamma^2} \int_{\partial \Om} \abs{ \vv \times \tmmathbf{e}_3 }^2 \nonumber\\
    & \eqs  \int_{\Om} \left| \grad \vv \right|^2 - \kappa^2  \int_{\Om} |
    \tmmathbf{v}_{\bot} |^2 \; + \; \frac{1}{\gamma^2} \int_{\partial \Om} |
    \tmmathbf{v}_{\bot} |^2,  \label{eq:EnInc}
  \end{align}
  with $\vv_{\bot} =\tmmathbf{v}- \left( \tmmathbf{v} \cdot \e_3 \right)
  \e_3$. Estimating the energy increment $\Ekg  (\tmmathbf{e}_3 + \vv) - \Ekg
  (\tmmathbf{e}_3)$ through the Poincar{\'e} inequality {\eqref{eq:PINB}} we get for every \(\delta>0\),
  \begin{align}
    \Ekg (\tmmathbf{e}_3 + \vv) - \Ekg (\tmmathbf{e}_3) & \geqslant \delta
    \left( c_{\Om} - \delta \right) \int_{\Om} | \tmmathbf{v}_{\bot} |^2 -
    \delta \int_{\partial \Om} | \tmmathbf{v}_{\bot} |^2 - \kappa^2 
    \int_{\Om} | \tmmathbf{v}_{\bot} |^2 \; + \; \frac{1}{\gamma^2}
    \int_{\partial \Om} | \tmmathbf{v}_{\bot} |^2 \nonumber\\
    & \geqslant  \left( \delta \, \left( c_{\Om} - \delta \right) - \kappa^2
    \right) \int_{\Om} | \tmmathbf{v}_{\bot} |^2 + \left( \;
    \frac{1}{\gamma^2} - \delta \right) \int_{\partial \Om} |
    \tmmathbf{v}_{\bot} |^2 . \label{optimalityOutOfPlane}
  \end{align}
If we set \(\delta_\gamma\assign\min\{\frac{c_\Om}{2},\frac{1}{\gamma^2}\}\) and \(\kappa_{\gamma}\assign (\delta_\gamma(c_\Omega-\delta_\gamma))^{1/2}>0\), then for every \(\kappa\in [0,\kappa_\gamma)\) there exists $\delta\in (0,\delta_\gamma)$ such that \(\delta \, \left( c_{\Om} - \delta \right)>\kappa^2\) and \(\frac{1}{\gamma^2}>\delta\). Hence, by \eqref{optimalityOutOfPlane}, $\e_3$ (and so \(-\e_3\)) is a minimum point of $\Ekg$, and any other minimum point \(\m\) can only be obtained by perturbations in the \(\e_3\) direction. This means that the constant out-of-plane vector fields \(\pm\e_3\) are the only minimizers of \(\Ekg\). 
  
A simpler argument gives a similar result for $\mathcal{E}_{\kappa, 0}$. Indeed, in this case, $\tmv \in H^1_0 (\Om, \RR^3)$ and {\eqref{eq:EnInc}}
  reads as
\[
\mathcal{E}_{\kappa, 0}( \tmmathbf{e}_3 + \vv) - \mathcal{E}_{\kappa, 0} (\tmmathbf{e}_3) = \int_{\Om} \left|\grad \vv \right|^2 - \kappa^2 \int_{\Om} | \tmmathbf{v}_{\bot} |^2. 
\]
But then the result follows from classical Poincar{\'e} inequality in $H^1_0 (\Om, \RR^3)$, by taking $\kappa_0\assign c_\Om$ where $c_\Om$ is the Poincaré constant.
\end{proof}

\begin{proof}[Proof of \Cref{prop:univconf}, \cref{universal_inplane}]
  The range of parameters under which the minimality of the constant in-plane configurations holds depends essentially on $\gamma$, and can be easily investigated through the classical trace inequality:
  \begin{equation}
    c_{\partial \Om}  \| u \|_{L^2 \left( \partial \Om \right)} \leqslant \| u
    \|_{H^1 \left( \Om \right)} \label{eq:ctrineq},
  \end{equation}
for some $c_{\partial \Om} > 0$ and every $u \in H^1 \left( \Om \right)$. Indeed, let \(\tmmathbf{e}_{\bot}\in\Stwo^2\) such that \(\tmmathbf{e}_{\bot}\cdot\tmmathbf{e}_3=0\) and let $\vv \in H^1 (\Om, \RR^3)$ such that $| \vv +\tmmathbf{e}_{\bot}|=1$. A simple computation gives that
\begin{align*}
\abs{(\vv+\tmmathbf{e}_{\bot})\times \tmmathbf{e}_{3}}^2-\abs{\tmmathbf{e}_{\bot}\times \tmmathbf{e}_{3}}^2
&=\abs{\vv\times\tmmathbf{e}_{3}}^2+2(\vv\times \tmmathbf{e}_{3})\cdot (\tmmathbf{e}_{\bot}\times\tmmathbf{e}_{3})\\
&=\abs{\vv\times\tmmathbf{e}_{3}}^2+2\vv\cdot\tmmathbf{e}_{\bot}\\
&=\abs{\vv\times\tmmathbf{e}_{3}}^2-\abs{\vv}^2\\
&=-(\vv\cdot\tmmathbf{e}_3)^2.
\end{align*}
Hence, we have
  \begin{align}
    \Ekg  (\tmmathbf{e}_{\bot} + \vv) - \Ekg (\tmmathbf{e}_{\bot}) 
%    & \eqs
%    \int_{\Om} \left| \grad \vv \right|^2 + \kappa^2  \int_{\Om} \left( \vv
%    \cdot \tmmathbf{e}_3 \right)^2 + \frac{1}{\gamma^2} \int_{\partial \Om} |
%    (\tmmathbf{e}_{\bot} +\tmmathbf{v}) \times \tmmathbf{e}_3 |^2 - |
%    \tmmathbf{e}_{\bot} \times \tmmathbf{e}_3 |^2 \nonumber\\
    & \eqs  \int_{\Om} \left| \grad \vv \right|^2 + \kappa^2  \int_{\Om}
    \left( \vv \cdot \tmmathbf{e}_3 \right)^2 - \frac{1}{\gamma^2}
    \int_{\partial \Om} (\tmmathbf{v} \cdot \tmmathbf{e}_3)^2 \nonumber\\
    & \geqslant  \int_{\Om} \left| \grad \vv_{\bot} \right|^2 + \left(
    c^2_{\partial \Om} \cdot \min \{ 1, \kappa^2 \} - \frac{1}{\gamma^2}
    \right)  \int_{\partial \Om} \left( \vv \cdot \tmmathbf{e}_3 \right)^2,
    \nonumber
  \end{align}
  where $\vv_{\bot} =\tmmathbf{v}- \left( \tmmathbf{v} \cdot \e_3 \right)\e_3$. Therefore, as soon as 
  \[
  \gamma\geqslant\gamma_{\kappa} \assign \frac{1}{c_{\partial \Om} \cdot \min \{ 1,\kappa\}}, 
  \]
we obtain that \(\tmmathbf{e}_\bot\) is a global minimizer of \(\Ekg\). Moreover, if \(\gamma>\gamma_{\kappa}\), we have that the constant in-plane vector fields \(\tmmathbf{e}_\bot\in\Stwo^2\), with \(\tmmathbf{e}_{\bot}\cdot\tmmathbf{e}_3=0\), are the only minimizers of \(\Ekg\). Indeed, if $\Ekg(\tmmathbf{e}_{\bot} + \vv) -\Ekg (\tmmathbf{e}_{\bot}) = 0$ then $\tmv_\bot$ is constant a.e.\ in \(\Om\) and, therefore, so is $(\tmv\cdot\e_3)$ due to constraint $|\tmmathbf{e}_{\bot}+\vv|=1$ imposed on $\vv$. Since  \(\tmv\cdot\tmmathbf{e}_3=0\) a.e.\ on \(\partial\Om\), we conclude that $\tmv$ is constant and in-plane. This concludes the proof.
\end{proof}

\section{Symmetries in the target space and range of minimizers}\label{sec:3}

In this section we show that due to symmetry of the problem the range of any minimizer is contained in a meridian of \(\Stwo^2\).

\subsection{Symmetries of the energy functional in the target space}First, it
is clear that the energy is invariant under the group of isometries that
preserve the vertical coordinate axis $\RR \e_3$, i.e.,
\[ \Othreethree \assign \left\{ \sigma \in O (3) \st \sigma \left( \e_3
   \right) = \e_3 \text{ or } \sigma \left( \e_3 \right) = - \e_3 \right\}. \]
This group is generated by the isotropy group $\{\sigma \in O (3) \st \sigma
(\e_3) = \e_3 \}$ and the reflection $\sigma_{\e_3}$ through the plane
orthogonal to $\e_3$.

\begin{proposition}\label{symmetryGroup}
  For every $\kappa,\gamma \in [0, + \infty)$, $\sigma \in
  \Othreethree$ and $\m \in H^1 (\Om, \Stwo^2)$ we have $\Ekg (\m) = \Ekg (\sigma \circ
  \m)$.
\end{proposition}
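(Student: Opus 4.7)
The plan is to check that each of the three terms in the energy functional is separately invariant under the action of any $\sigma\in\Othreethree$, which will immediately yield the result.

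First I would treat the Dirichlet term. Since $\sigma$ extends to a linear orthogonal transformation of $\RR^3$, its differential at every point is $\sigma$ itself, so $\grad(\sigma\circ\m)=\sigma\,\grad \m$ pointwise a.e.\ in $\Om$. Applying the orthogonal matrix $\sigma$ column by column preserves Euclidean norms, hence $|\grad(\sigma\circ\m)|^2=|\grad\m|^2$ a.e., and the first integral is unchanged.

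Next I would handle the vertical penalization. Because $\sigma\in O(3)$, we have $(\sigma\m)\cdot\e_3=\m\cdot\sigma^\T\e_3=\m\cdot\sigma^{-1}\e_3$. The defining property of $\Othreethree$ says $\sigma(\e_3)=\pm\e_3$, and since $\sigma$ is a bijection this forces $\sigma^{-1}(\e_3)=\pm\e_3$ with the same sign. Therefore $((\sigma\m)\cdot\e_3)^2=(\m\cdot\e_3)^2$, and the second integral is unchanged. For the boundary term I would use the identity $|\w\times\e_3|^2=|\w|^2-(\w\cdot\e_3)^2$, which for $\w=\sigma\m$ and $\w=\m$ (both unit vectors, pointwise on $\partial\Om$) combined with the previous observation yields $|(\sigma\m)\times\e_3|^2=1-((\sigma\m)\cdot\e_3)^2=1-(\m\cdot\e_3)^2=|\m\times\e_3|^2$, so the third integral matches as well.

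There is no genuine obstacle here: the statement is really a bookkeeping check that the three natural ingredients—$|\grad\m|^2$, $\m\cdot\e_3$, and $\m\times\e_3$—transform as expected under the subgroup of $O(3)$ fixing the $\e_3$-axis up to sign. Summing the three invariances gives $\Ekg(\sigma\circ\m)=\Ekg(\m)$ for every $\kappa,\gamma\in[0,+\infty)$ (including the boundary-free case $\gamma=0$, where the boundary term is replaced by the Dirichlet constraint $\m\pm\e_3\in H^1_0(\Om,\RR^3)$; this constraint is itself invariant under $\Othreethree$ since $\sigma(\pm\e_3)=\pm\e_3$).
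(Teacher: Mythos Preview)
Your argument is correct and is exactly the straightforward verification the paper has in mind; in fact the paper does not supply a proof at all, treating the invariance as clear from the definition of $\Othreethree$. Your term-by-term check (orthogonality for the Dirichlet term, $\sigma^{-1}\e_3=\pm\e_3$ for the anisotropy, and the identity $|\w\times\e_3|^2=|\w|^2-(\w\cdot\e_3)^2$ for the boundary term) is the natural way to make this explicit.
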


\Cref{symmetryGroup} applies in particular to the reflection $\sigma=\sigma_{\tmv}$, defined by $\sigma_{\tmv} (\ww) = \ww - 2 (\vv \cdot \ww) \tmv$, through the plane orthogonal to a vector $\tmv \in \Stwo^2$ which is either equal to $\e_3$ or orthogonal to $\e_3$. Using the fact that the $H^1$ seminorm is preserved by taking the positive or negative parts, we also have the following result.

\begin{proposition}
  \label{symmetryplus} Let $\kappa \in [0,+\infty)$, $\vv \in \Stwo^2$ and $\m
  \in H^1 (\Om, \Stwo^2)$. If either $\tmv = \e_3$ or $\tmv \cdot \e_3 = 0$,
  then $\mathcal{E}_{\kappa, \gamma} (\m) =\mathcal{E}_{\kappa, \gamma}
  (\sigma_{\tmv}^+ \circ \m)$, where
  \begin{equation}
    \sigma_{\tmv}^+ (\w) \assign \left\{\arraycolsep=1.4pt\def\arraystretch{1.6}\begin{array}{ll}
      \w & \text{if } \w \cdot \tmv \geqslant 0,\\
      \w - 2 (\tmv \cdot \w) \tmv & \text{if } \w \cdot \tmv < 0.
    \end{array}\right.
  \end{equation}
\end{proposition}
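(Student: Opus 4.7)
The plan is to decompose \(\m\) along the constant direction \(\tmv\) and its orthogonal complement, and show that \(\sigma_{\tmv}^+\) acts only by taking the absolute value of the \(\tmv\)-component, leaving the orthogonal component unchanged. Concretely, set \(f \assign \m\cdot\tmv \in H^1(\Om)\) and \(\m_{\bot}\assign \m - f\,\tmv\), so that \(\m_{\bot}\cdot\tmv = 0\) pointwise. A case check in the definition of \(\sigma_{\tmv}^+\) gives
\[
\sigma_{\tmv}^+\circ \m \;=\; \abs{f}\,\tmv + \m_{\bot}
\]
both when \(f\geqslant 0\) and when \(f<0\). In particular \(\sigma_{\tmv}^+\circ\m\in H^1(\Om,\Stwo^2)\), since \(f\mapsto\abs{f}\) preserves \(H^1\).

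For the Dirichlet term I would exploit the pointwise orthogonality \(\partial_i \m_{\bot}\cdot \tmv=0\) (differentiating \(\m_\bot\cdot\tmv=0\) with \(\tmv\) constant) to split
\[
\abs{\grad \m}^2 \;=\; \abs{\grad f}^2 + \abs{\grad \m_{\bot}}^2
\quad\text{a.e.\ in \(\Om\),}
\]
and the same identity for \(\sigma_{\tmv}^+\circ\m\) with \(\abs{f}\) in place of \(f\). The classical \(H^1\) fact that \(\grad\abs{f}=\grad f\) a.e.\ on \(\{f>0\}\), \(\grad\abs{f}=-\grad f\) a.e.\ on \(\{f<0\}\) and \(\grad f=0\) a.e.\ on \(\{f=0\}\) (equivalently, \(\grad f^{\pm}\) has the same \(L^2\) norm as \(\grad f\) restricted to \(\{\pm f>0\}\)) then yields \(\int_\Om\abs{\grad \m}^2=\int_\Om\abs{\grad(\sigma_{\tmv}^+\circ\m)}^2\). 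This is the point explicitly flagged in the paragraph before the statement.

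For the lower-order terms I would simply observe that \((\sigma_{\tmv}^+\circ\m)\cdot\e_3\) has the same square as \(\m\cdot\e_3\) in both admissible cases: if \(\tmv=\e_3\) then \(\m\cdot\e_3=f\) and \((\sigma_{\tmv}^+\circ\m)\cdot\e_3=\abs{f}\), while if \(\tmv\cdot\e_3=0\) then \(\m\cdot\e_3=\m_{\bot}\cdot\e_3\) is left untouched by \(\sigma_{\tmv}^+\). Since \(\abs{\m\times\e_3}^2=1-(\m\cdot\e_3)^2\) for \(\m\) valued in \(\Stwo^2\), both the bulk penalty \(\kappa^2\int_\Om(\m\cdot\e_3)^2\) and the boundary penalty \(\frac{1}{\gamma^2}\int_{\partial\Om}\abs{\m\times\e_3}^2\) are preserved. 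Summing the three contributions gives \(\Ekg(\m)=\Ekg(\sigma_{\tmv}^+\circ\m)\).

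No real obstacle is expected: the one subtle point is the justification of the Dirichlet invariance, which comes down to the standard Stampacchia-type lemma \(\grad f=0\) a.e.\ on \(\{f=0\}\) for \(f\in H^1(\Om)\); the orthogonal decomposition \(\m=f\,\tmv+\m_\bot\) reduces the argument on \(\Stwo^2\)-valued maps to this scalar statement.
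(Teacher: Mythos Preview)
Your argument is correct and is exactly the approach the paper indicates: the paper does not spell out a proof but simply records before the statement that it follows from ``the fact that the $H^1$ seminorm is preserved by taking the positive or negative parts,'' which is precisely your Stampacchia-type reduction via the decomposition $\m=f\,\tmv+\m_\bot$. Your handling of the two lower-order terms through $(\m\cdot\e_3)^2$ and $|\m\times\e_3|^2=1-(\m\cdot\e_3)^2$ is the natural (and only reasonable) verification, so there is nothing to add.
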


This applies for instance to $\left( \sigma_{\e_1}^+ \circ \m \right) =
(\abs{m_1}, m_2, m_3)$, $\left( \sigma_{\e_2}^+ \circ \m \right) = (m_1,
\abs{m_2}, m_3)$ and $\left( \sigma_{\e_3}^+ \circ \m \right) = (m_1, m_2,
\abs{m_3})$.

\subsection{Regularity of minimizers}
For a smooth bounded domain $\Om \subset \RR^2$, the regularity of minimizers follows from the classical regularity theory of Schoen-Uhlenbeck \cite{SchUhl}. However the regularity in dimension \(N\geqslant 3\) is not trivially guaranteed in our problem, as there may exist singular minimizing homogeneous harmonic maps into \(\Stwo^2\) such as \(x\mapsto\frac{x}{\abs{x}}\) in \(\RR^3\). Here, we can prove regularity by using the symmetries. We start with an easy lemma.
\begin{lemma}
\label{sobolevContinuous}
Let \(u\in W^{1,p}(\Omega)\) be a Sobolev function defined on an open set \(\Omega\subset\RR^N\), \(p\geqslant 1\). If \(\abs{u}\) is continuous, then \(u\) is continuous. 
\end{lemma}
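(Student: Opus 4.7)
The plan is to reduce continuity of \(u\) to controlling the possible sign changes of \(u\) on the set where \(|u|>0\). I fix a continuous representative \(v\) of \(|u|\), set \(Z\assign v^{-1}(0)\) (closed in \(\Omega\)), and work on each connected component \(V\) of the open set \(\Omega\setminus Z\), where \(v>0\).

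On such a \(V\), I would argue that \(u\) has a fixed sign almost everywhere. For a ball \(B\Subset V\) there is \(c>0\) such that \(v\geqslant c\) on \(B\); since the sign function is not Lipschitz, I pick instead a Lipschitz function \(F\colon\RR\to\RR\) that coincides with \(\operatorname{sgn}\) on \(\{|t|\geqslant c\}\) and is constant on each half-line \([c,+\infty)\) and \((-\infty,-c]\). The chain rule for Sobolev maps then gives \(F\circ u\in W^{1,p}(B)\) with \(\nabla(F\circ u)=F'(u)\nabla u\), and on \(B\) we have \(|u|=v\geqslant c\), forcing \(F'(u)=0\) almost everywhere. Hence \(F\circ u=\operatorname{sgn}(u)\) has vanishing weak gradient on \(B\), and is therefore almost-everywhere equal to a constant \(\varepsilon_B\in\{\pm 1\}\). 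Overlapping ball-by-ball, the connectedness of \(V\) promotes this to a global sign \(\varepsilon_V\in\{\pm 1\}\) with \(u=\varepsilon_V v\) almost everywhere on \(V\).

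I would then define the candidate continuous representative by \(\widetilde u\assign\varepsilon_V v\) on each connected component \(V\subset\Omega\setminus Z\) and \(\widetilde u\equiv 0\) on \(Z\). By construction \(\widetilde u=u\) almost everywhere and \(\widetilde u\) is continuous on \(\Omega\setminus Z\); continuity at any \(x_0\in Z\) follows from \(|\widetilde u(x)-\widetilde u(x_0)|=v(x)\to v(x_0)=0\) as \(x\to x_0\). The main obstacle is the sign-selection step on each \(V\): the sign function itself is not admissible in the Sobolev chain rule, hence the detour through the truncation \(F\), tailored to be locally constant outside a neighborhood of the origin. Beyond that, the proof rests on the standard fact that a \(W^{1,p}\) function with vanishing weak gradient on a connected open set is almost-everywhere constant, together with the elementary patching across connected components.
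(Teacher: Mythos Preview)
Your proof is correct and follows essentially the same approach as the paper's. Both arguments hinge on the same key step: on a ball where \(|u|\geqslant c>0\), compose \(u\) with a Lipschitz truncation (the paper uses \(t\mapsto\max\{\min\{t/c,1\},-1\}\); you use a generic Lipschitz \(F\) equal to \(\operatorname{sgn}\) outside \((-c,c)\)) to obtain a \(\{-1,1\}\)-valued Sobolev function, which must be a.e.\ constant on the connected ball, forcing \(u=\pm|u|\) there. Your version is organized slightly differently---you globalize over connected components of \(\{|u|>0\}\) and explicitly build the continuous representative, while the paper argues pointwise continuity---but the substance is identical.
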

\begin{proof}
If \(u(x)=0\), then \(u\) is continuous at \(x\). If \(u(x)\neq 0\), then, as \(\abs{u}\) is continuous, there 
exists a non empty ball \(B_r(x)\subset\Omega\) where \(\abs{u}\geqslant\alpha>0\). Let \(v\in W^{1,p}(B_r(x))\) be defined by \(v(x)\assign\max\{\min\{\frac{1}{\alpha}\, u(x),1\},-1\}\). We 
have that \(v(x)\in\{-1,1\}\) everywhere in \(B_r(x)\), which for a Sobolev function means that \(v\) is equal to a constant a.e.\ in \(B_r(x)\). This means that the sign of \(u\) does not change on \(B_r(x)\), i.e. that \(u=\abs{u}\) a.e.\ in \(B_r(x)\) or \(u=-\abs{u}\) a.e.\ in \(B_r(x)\). Thus, \(u\) is continuous.
\end{proof}
\begin{proposition}
\label{regularityProposition}
Let \(\kappa,\gamma\in[0,+\infty)\) and let \(\m\in H^1(\Omega,\Stwo^2)\) be a global minimizer of \(\Ekg\). Then \(\m\in\mathcal{C}^\infty(\Omega,\Stwo^2)\).
\end{proposition}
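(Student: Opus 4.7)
The plan is to exploit the reflection symmetries of $\Ekg$ to produce a minimizer whose image lies in a geodesic ball of $\Stwo^2$ of radius strictly less than $\pi/2$, apply the classical interior regularity theory for minimizing harmonic maps into such balls, and then transfer the regularity back to the original minimizer via \Cref{sobolevContinuous} and an elliptic bootstrap.

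First, I would apply \Cref{symmetryplus} successively with $\tmv=\e_1$, $\tmv=\e_2$, and $\tmv=\e_3$ (the first two being admissible because $\e_1,\e_2\perp\e_3$). The composition of the three folding maps $\sigma^+_{\e_i}$ sends $\m=(m_1,m_2,m_3)$ to $\tilde\m\assign(\abs{m_1},\abs{m_2},\abs{m_3})$, and since each step preserves the energy, $\tilde\m$ is still a global minimizer of $\Ekg$. Its image lies in the closed first octant of $\Stwo^2$, and every point there is at geodesic distance at most $\arccos(1/\sqrt{3})<\pi/2$ from the barycentric direction $(1,1,1)/\sqrt{3}$. Hence the image of $\tilde\m$ is contained in a geodesic ball of $\Stwo^2$ of radius strictly less than $\pi/2$.

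Second, I would invoke the classical regularity theory for energy-minimizing harmonic maps into geodesic balls of radius less than $\pi/2$ in $\Stwo^2$ (Hildebrandt--Kaul--Widman). The extra terms in $\Ekg$ beyond the Dirichlet integral --- the anisotropy $\kappa^2(\m\cdot\e_3)^2$ and the boundary penalization --- are respectively a smooth bounded potential on the target and a purely boundary contribution, so they do not affect interior regularity; the Euler--Lagrange system \eqref{eq:ELstrong} is the harmonic map system into $\Stwo^2$ plus a smooth zeroth-order term. This yields $\tilde\m\in\mathcal C^\infty(\Omega,\Stwo^2)$, and in particular $\abs{m_i}\in\mathcal C^\infty(\Omega)$ for each $i\in\{1,2,3\}$.

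Third, \Cref{sobolevContinuous} applied componentwise to $m_i\in H^1(\Omega)$ yields continuity of each $m_i$ on $\Omega$, so $\m$ is continuous and sphere-valued. Once continuity is known, a standard elliptic bootstrap on the semilinear system \eqref{eq:ELstrong} promotes $\m$ to $\mathcal C^\infty(\Omega,\Stwo^2)$: the continuity of $\m$ rules out singular profiles such as $x/\abs{x}$, and iterated Schauder estimates applied to the quadratic right-hand side produce arbitrary regularity. The main obstacle is step two: identifying a regularity theorem sufficiently general to cover the perturbed minimization problem. This is handled by observing that $\tilde\m$ minimizes the full $\Ekg$ and that the modification of the harmonic map system by a smooth bounded lower-order term is harmless for the convex-image regularity theory.
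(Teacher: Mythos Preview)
Your proposal is correct and follows essentially the same route as the paper: fold $\m$ to $\tilde\m=(\abs{m_1},\abs{m_2},\abs{m_3})$ via \Cref{symmetryplus}, use that the first octant lies in a strictly convex geodesic ball to get interior regularity of $\tilde\m$ from the small-image regularity theory for (perturbed) harmonic maps, transfer continuity back to $\m$ via \Cref{sobolevContinuous}, and bootstrap. The only cosmetic difference is that the paper cites Schoen--Uhlenbeck's theorem on minimizers with image in a strictly convex set, whereas you cite Hildebrandt--Kaul--Widman; both yield the needed regularity here.
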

\begin{remark}
Below, in the proof of \Cref{structureMinimizers}, we show that actually global minimizers of \(\Ekg\) are in \(\mathcal{C}^\infty(\overline\Omega,\Stwo^2)\), i.e., smooth up to the boundary.
\end{remark}

\begin{proof}
By \Cref{symmetryplus}, \(\tilde{\m}\assign (\abs{m_1},\abs{m_2},\abs{m_3})\) is still a global minimizer of \(\Ekg\). In particular, \(\tilde{\m}\) is a global minimizer of \(\mathcal{E}_\kappa\) under its own boundary condition. Since \(\tilde{\m}\) is valued into a strictly convex subset of the sphere \(\Stwo^2\) and since \(\mathcal{E}_\kappa\) is nothing but a perturbation of the Dirichlet energy by a lower order term (namely, the zero-order term of energy density \(\kappa^2 (\m\cdot \e_3)^2\)), we deduce from \cite[Theorem IV and its corollary]{SchUhl} that \(\tilde{\m}\) is continuous in \(\Omega\)\footnote{Note that the Shoen-Uhlenbeck regularity theory gives smoothness of \(\m\) with no restriction on the image of \(\m\) in dimension \(N=2\); in dimension \(N\geqslant 3\), the presence of singularities is ruled out thanks to the condition that \(\tilde{\m}\) is valued into a strictly convex subset of \(\Stwo^2\).}. Hence \(\m\) is continuous by \Cref{sobolevContinuous}. But it is then standard to prove that \(\m\) is smooth (we refer to \cite[\S 2.10 and \S 3.2]{Simon96}, for instance).
\end{proof}

\subsection{Range of minimizers}
We start with the following consequence of the maximum principle.
\begin{lemma}
  \label{LemmaRigid} Let $\kappa,\gamma \in [0, + \infty)$ and \(\vv\in\Stwo^2\) such that either \(\vv\cdot \e_3=0\) or \(\vv\in\{-\e_3,\e_3\}\). If $\m$ is a global minimizer of $\mathcal{E}_{\kappa,\gamma}$, then either $\m \cdot \tmv \equiv 0$ in $\Om$ or $\m \cdot\tmv$ never vanishes in $\Om$.
\end{lemma}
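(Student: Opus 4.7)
The plan is to exploit the invariance under $\sigma_\vv^+$ from \Cref{symmetryplus} to replace $\m\cdot\vv$ by a non-negative smooth scalar, and then to invoke the strong maximum principle on a linear elliptic equation. First, I would set $\tilde\m\assign\sigma_\vv^+\circ\m$. By \Cref{symmetryplus} (whose hypotheses are exactly the assumptions on $\vv$) the map $\tilde\m$ is again a global minimizer of $\Ekg$, and so by \Cref{regularityProposition} it lies in $\mathcal C^\infty(\Om,\Stwo^2)$. A direct computation from the formula defining $\sigma_\vv^+$ gives $\sigma_\vv^+(\ww)\cdot\vv=\abs{\ww\cdot\vv}$ for every $\ww\in\RR^3$, so that $u\assign \tilde\m\cdot\vv=\abs{\m\cdot\vv}$ is a smooth, non-negative function on $\Om$. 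The problem is thereby reduced to showing that $u$ is either identically zero or strictly positive on $\Om$.

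Next, I would derive a linear elliptic equation for $u$. Plugging $\tilde\m$ into the strong Euler--Lagrange equation \eqref{eq:ELstrong} and taking the scalar product with the constant vector $\vv$ yields
\[
-\Delta u + \kappa^2(\tilde\m\cdot\e_3)(\vv\cdot\e_3)=\bigl(\abs{\nabla\tilde\m}^2+\kappa^2(\tilde\m\cdot\e_3)^2\bigr)\,u \qquad\text{in } \Om.
\]
When $\vv\cdot\e_3=0$, the cross-term vanishes and the equation reduces to $-\Delta u=c(x)\,u$ with $c(x)\geqslant 0$, so $u$ is superharmonic. When $\vv=\pm\e_3$, we have $\tilde\m\cdot\e_3=\pm u$, and the equation becomes $-\Delta u + q(x)\,u=0$ with $q(x)=\kappa^2(1-u^2)-\abs{\nabla\tilde\m}^2$, a bounded (though possibly sign-changing) coefficient. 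In both cases $u\geqslant 0$ is a classical solution of a linear elliptic equation with locally bounded coefficients on the connected open set $\Om$.

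Finally, the classical strong maximum principle for non-negative solutions of linear second-order elliptic equations with bounded coefficients (equivalently, the Harnack inequality) yields the dichotomy: either $u\equiv 0$ in $\Om$, or $u>0$ at every point of $\Om$. Rewriting this in terms of $\m$ gives precisely $\m\cdot\vv\equiv 0$ in $\Om$ or $\m\cdot\vv$ never vanishing in $\Om$. The only point that requires a word of care is the case $\vv=\pm\e_3$, where the zero-order coefficient $q$ has no a priori sign; this is however not a genuine obstacle, since the strong maximum principle for non-negative solutions of $-\Delta u + q\,u=0$ holds for any bounded $q$ (e.g.\ by Harnack's inequality, or by applying the classical Hopf principle to $-\Delta u + q^+ u = q^- u\geqslant 0$ with the harmless modification $c\mapsto q^+\geqslant 0$).
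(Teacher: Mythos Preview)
Your proof is correct and follows essentially the same route as the paper: symmetrize via $\sigma_\vv^+$ to obtain a non-negative smooth minimizer, project the strong Euler--Lagrange equation onto $\vv$, and then apply the strong maximum principle to the resulting linear scalar equation. The only cosmetic difference is that the paper cites directly a version of the maximum principle valid without a sign condition on the zero-order coefficient, whereas you spell out why the sign-changing case $\vv=\pm\e_3$ is harmless.
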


\begin{proof}
  By \Cref{symmetryplus}, $\sigma_{\tmv}^+ \circ \m$ is still a minimizer of $\mathcal{E}_{\kappa,\gamma}$. By \Cref{regularityProposition}, $\sigma_{\tmv}^+ \circ\m$ is smooth. In particular, \(\sigma_{\tmv}^+ \circ \m\) (and not only \(\m\)) solves the Euler-Lagrange equation \eqref{eq:ELstrong}; projecting this equation on \(\vv\), we obtain that  $\left( \sigma_{\tmv}^+ \circ \m \right)\cdot \tmv = \left| \m \cdot \tmv \right| $ solves the elliptic equation
  \[ \Delta \left| \m \cdot \tmv \right| + c (x) \left| \m \cdot \tmv \right|
     = 0 \hspace{1em} \text{in } \Om, 
     \]
     with
     \[c (x) =
     \left\{\arraycolsep=4.8pt\def\arraystretch{1.6}\begin{array}{ll}
       \abs{\nabla \left( \sigma_{\tmv}^+ \circ \m \right)}^2 + \kappa^2 m_{3^{}}^2
       & \text{if } \tmv \cdot \e_3 = 0,\\
       \abs{\nabla \left( \sigma_{\tmv}^+ \circ \m \right)}^2 + \kappa^2
       (m_{3^{}}^2 - 1) & \text{if } \tmv = \e_3.
     \end{array}\right. \]
We then apply the maximum principle \cite[Theorem 2.10]{Han2011} to find that either $\m \cdot \tmv \equiv 0$ or $\m \cdot \tmv$ does not vanish in $\Om$.
\end{proof}

\begin{proof}[Proof of \Cref{structureMinimizers}]
By \Cref{regularityProposition}, \(\m\) is smooth. For the rest of the proof, we proceed in four steps.

\noindent{\it Step 1. \(\m\) is valued into a meridian.} For $\tmv\in \Stwo^2$ such that \(\tmv\cdot e_3=0\), we denote by $\Stwo^2_+(\tmv)$ the closed hemisphere directed by $\tmv$, i.e., the closed subset of $\Stwo^2$ obtained intersecting $\Stwo^2$ with the closed half-space $\{z\in\RR^3\st z\cdot\tmv\geqslant 0\}$. If $\m\equiv \pm\e_3$ in $\Om$ there is nothing to prove. If not, there exists $x_0\in\Om$ such that the projection $\m_\perp(x_0)$ of $\m(x_0)$ onto the plane orthogonal to $\e_3$ is different from zero. We set $\tmv_0\assign\m_\perp(x_0)/|\m_\perp(x_0)|$ and we claim that the target space of $\m$ is contained in the meridian passing through $\tmv_0$. By construction, we have
\[
\m(x_0)\cdot \tmv>0\quad\text{for every }\tmv\in V\assign\{\tmv\in \Stwo^2\st \tmv\cdot e_3=0,\, \tmv\cdot\tmv_0>0\}.
\]
Therefore, by \Cref{LemmaRigid} and the continuity of $\m$, we get that for every $x\in\Om$ there holds
\[
\m(x)\in \textstyle\bigcap_{v\in V}\Stwo^2_+(\tmv).
\]
As the intersection on the right-hand side is the meridian passing through $\tmv_0$ we conclude.

\noindent{\it Step 2. The image of \(\m\) is contained in a quarter of vertical great circle (or half of meridian).}
Indeed, applying again \Cref{LemmaRigid} to $\tmv = \e_3$, we obtain that either
\[
\text{\(\m\cdot \e_3\geqslant 0\) in \(\Om\),}\quad\text{ or \(\m\cdot \e_3\leqslant 0\) in \(\Om\).}
\]
 Since $\Othreethree$ acts transitively on the quadrants of meridians, we can express \(\m\) in terms of a particular solution valued into the quadrant of meridian \(\{m_2=0\}\cap\{m_1,m_3\geqslant 0\}\). Namely, there exists $\sigma \in \Othreethree$ such that $\m = \sigma \circ \tmu$, where $\tmu\in \mathcal C^\infty(\Om,\Stwo^2)$ is of the form \(\tmu=(u_1,0,u_2)\) with \(u_1,u_2\in \mathcal C^\infty(\Om,\RR)\) such that \(u_1^2+u_2^2=1\) and $u_1, u_2 \geqslant 0$ in $\Om$. We then lift the map $\tmu$ to $\RR$ by writing $\tmu = (\sin \varphi, 0, \cos \varphi)$ with $\varphi \in \mathcal C^\infty( \Om)$ and \(0\leqslant\varphi\leqslant\frac\pi 2\)  in \(\Omega\). We conclude by noticing that \Cref{LemmaRigid} also tells us that either \(\varphi\equiv 0\), or \(\varphi\equiv\frac\pi 2\), or \(0<\varphi<\frac\pi 2\) in \(\Om\).

\noindent{\it Step 3. Regularity up to the boundary.}  In the proof of Proposition~\ref{regularityProposition} we used a symmetry argument in order to apply the regularity theory of Schoen-Uhlenbeck \cite{SchUhl} and infer that a minimizer $\m \in \mathcal C^\infty(\Om,\mathbb S^2)$; we now claim that $\m\in \mathcal C^\infty(\overline\Om,\mathbb S^2)$, using the previous steps. Indeed, when $\gamma >0$ it is clear that the lifting $\varphi$ of $\m$ satisfies
\begin{align}
-\Delta \varphi &=\frac{\kappa^2}{2}\sin(2\varphi) \quad \text{in \(\Om\),}\\
\frac{\partial \varphi}{\partial \n}& =-\frac{1}{2 \gamma^2}\sin (2\varphi) \quad\text{on \(\partial \Om\)}.
\end{align}
\noindent Since $\frac{1}{\gamma^2} \sin (2 \varphi) \in H^{1 / 2} (\partial \Omega)$,
there exists $\tilde{\varphi} \in H^2(\Omega)$ such that $\partial_{\n} \tilde{\varphi} = - \frac{1}{2\gamma^2} \sin (2 \varphi)$ (see, e.g., \cite[Theorem~5.8]{Necas}). For the difference $\varphi - \tilde{\varphi}$ we have $\Delta (\varphi - \tilde{\varphi}) \in L^2(\Omega)$ and $\partial_{\n} (\varphi - \tilde{\varphi})  =  0$ on $\partial \Omega$. Hence, by classical elliptic regularity, we have $\varphi - \tilde{\varphi} \in H^2 (\Omega)$. It follows that $\varphi \in H^2 (\Omega)$ and therefore $\sin(2\varphi) \in H^2(\Omega)$ (see e.g. \cite[Proposition~3.9]{Taylor96}) and  $- \frac{1}{\gamma^2} \sin (2 \varphi)\in H^{3/2}(\partial \Omega)$. A bootstrap argument and Sobolev embedding theorems conclude the proof. Indeed, one can iterate the construction to infer the existence for every $k\geqslant 2$ of $\tilde{\varphi} \in H^k(\Omega)$ such that $\Delta (\varphi - \tilde{\varphi}) \in H^{k-2}(\Omega)$ and $\partial_{\n} (\varphi - \tilde{\varphi})  =  0$ on $\partial \Omega$. In the case $\gamma=0$ regularity up to the boundary follows from the standard elliptic regularity.

\noindent{\it Step 4. Range of \(\operatorname{tr}_{\partial\Omega}\varphi\).} We now show that when $\gamma>0$, if \(0<\varphi<\frac\pi 2\) in \(\Om\), then \(0<\varphi<\frac\pi 2\) on \(\partial\Om\). Let us assume that at some point $x_0 \in \partial \Om$ we have $\varphi (x_0)=0$. Then we know that $\Delta (-\varphi) \geqslant 0$ and  $-\varphi(x_0)> -\varphi$ in $\Omega$. Using Hopf Lemma (see \cite[Lemma 3.4]{gilbarg77}), we deduce that $\partial_{\n} (-\varphi) (x_0) >0$, which contradicts the boundary condition $\partial_{\n} \varphi (x_0) =0$. Hence $\varphi(x)>0$ in $\overline \Omega$.

Assume now that there exists $x_0 \in \partial \Omega$ such that $\varphi(x_0) = \frac{\pi}{2}$. Then we define $u =\varphi-\frac{\pi}{2} \leqslant 0$  and  we have $0=u(x_0)>u(x)$ for $x \in \Om$. Moreover, we have
$$
\Delta u - \kappa^2\frac{\sin(2u)}{2u} u =0 \quad \hbox{  in } \ \Omega.
$$
Defining $c(x) =  - \kappa^2\frac{\sin(2u(x))}{2u(x)}$ we know that $-\kappa^2 \leqslant c(x) \leqslant 0$. Therefore, using  \cite[Lemma 3.4]{gilbarg77} in the case $c(x) \leqslant 0$ we deduce that $\partial_{\n} u(x_0)>0$, which contradicts $\partial_{\n} u(x_0)=0$ due to boundary conditions. Hence $\varphi(x) < \frac{\pi}{2}$ in $\overline \Omega$.
\end{proof}

\section{Uniqueness of minimizers up to a symmetry}\label{sec:4}

{\noindent} In this section we prove \Cref{corollaryDirichlet}, which is a direct consequence of the following two lemmas.

\begin{lemma}
  \label{prop:mainprop}
Let $\m \in H^1(\Om, \Stwo^2 )$ and $\vv \in H^1_0 \left( \Om, \RR^3 \right)$ satisfy $\m+\tmmathbf{v}\in\Stwo^2$ a.e.\  in \(\Omega\). If $\m$ satisfies the Euler-Lagrange equations \eqref{eq:ELweak} and if $m_1 > 0$ a.e.\  in $\Om$, then \(m_1\) is bounded below by positive constants on compact subsets of \(\Om\) and
  \begin{equation}
    \mathcal{E}_{\kappa} (\m+\tmmathbf{v}) -\mathcal{E}_{\kappa}
    (\m) \geqslant \int_{\Om} m_1^2  \left| \nabla \left(
    \frac{\tmmathbf{v}}{m_1} \right) \right|^2 + \kappa^2  \int_{\Om}
    (\tmmathbf{v} \cdot \tmmathbf{e}_3)^2 . \label{eq:mainresulest}
  \end{equation}
\end{lemma}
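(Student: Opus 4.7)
\emph{Strategy.} The plan is to derive \eqref{eq:mainresulest} from a hidden-convexity decomposition in the spirit of Benguria--Brezis--Lieb: test the Euler--Lagrange equation \eqref{eq:ELweak} against $\vv$ to remove the linear part of the energy increment, then exploit the scalar equation satisfied by $m_1$ alone to absorb the remaining sign-indefinite quadratic term. Expanding $\mathcal{E}_\kappa(\m+\vv)-\mathcal{E}_\kappa(\m)$ and using the sphere constraint $|\m+\vv|^2=|\m|^2=1$ (i.e.\ $2\m\cdot\vv = -|\vv|^2$), testing \eqref{eq:ELweak} against $\vv\in H^1_0$ (so the boundary term vanishes) yields
\[
\mathcal{E}_\kappa(\m+\vv)-\mathcal{E}_\kappa(\m) = \int_\Om |\nabla\vv|^2 + \kappa^2(\vv\cdot\e_3)^2 - \int_\Om (|\nabla\m|^2 + \kappa^2 m_3^2)|\vv|^2,
\]
so \eqref{eq:mainresulest} reduces to
\[
\int_\Om |\nabla\vv|^2 - (|\nabla\m|^2+\kappa^2 m_3^2)|\vv|^2 \;\geqslant\; \int_\Om m_1^2\,|\nabla(\vv/m_1)|^2.
\]

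\emph{Scalar equation and positivity of $m_1$.} Testing \eqref{eq:ELweak} with vector fields of the form $\varphi = \phi\,\e_1$, $\phi\in H^1_0(\Om)$, and using $\e_1\cdot\e_3 = 0$ (which kills both bulk and boundary contributions involving $\e_3$), $m_1$ weakly solves
\[
-\Delta m_1 = (|\nabla\m|^2 + \kappa^2 m_3^2)\,m_1\quad\text{in }\Om.
\]
A standard bootstrap makes $m_1$ continuous, and since the zero-order coefficient is nonnegative, the strong minimum principle combined with the a.e.\ positivity hypothesis upgrades this to pointwise positivity $m_1 > 0$ in $\Om$; continuity then yields the positive lower bounds of $m_1$ on every compact subset.

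\emph{Hidden-convexity identity and passage to the limit.} A direct product-rule calculation applied to the decomposition $\vv = (m_1+\varepsilon)\cdot \vv/(m_1+\varepsilon)$ produces the pointwise identity
\[
|\nabla\vv|^2 = (m_1+\varepsilon)^2\bigl|\nabla(\vv/(m_1+\varepsilon))\bigr|^2 + \nabla m_1 \cdot \nabla\bigl(|\vv|^2/(m_1+\varepsilon)\bigr), \quad \varepsilon > 0.
\]
Since $\vv \in H^1_0$ and $m_1+\varepsilon\geqslant\varepsilon$, the scalar $\phi_\varepsilon \assign |\vv|^2/(m_1+\varepsilon)$ belongs to $H^1_0(\Om)$, and testing the scalar equation for $m_1$ against $\phi_\varepsilon$ gives
\[
\int_\Om \nabla m_1\cdot\nabla\phi_\varepsilon = \int_\Om (|\nabla\m|^2+\kappa^2 m_3^2)\,\frac{m_1}{m_1+\varepsilon}\,|\vv|^2.
\]
The sphere constraint furnishes the uniform bound $|\vv|\leqslant|\m|+|\m+\vv|\leqslant 2$, so the integrand on the right is dominated by $4(|\nabla\m|^2+\kappa^2 m_3^2) \in L^1(\Om)$; since $m_1>0$ a.e., dominated convergence lets $\varepsilon\to 0^+$ and produces the limit $\int_\Om(|\nabla\m|^2+\kappa^2 m_3^2)|\vv|^2$. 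Integrating the pointwise identity, rearranging, and applying Fatou's lemma to the first term on the right then yields
\[
\int_\Om m_1^2\,|\nabla(\vv/m_1)|^2 \;\leqslant\; \int_\Om |\nabla\vv|^2 - \int_\Om (|\nabla\m|^2+\kappa^2 m_3^2)|\vv|^2,
\]
which together with the reduction in the first paragraph proves \eqref{eq:mainresulest}.

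\emph{Main obstacle.} The delicate point is that $m_1$ may vanish on $\partial\Om$ (as happens, for instance, when $\gamma=0$), so $|\vv|^2/m_1$ is not a priori an admissible test function in $H^1_0(\Om)$; this is what forces the $\varepsilon$-shift in the scalar denominator. The subsequent passage $\varepsilon\to 0$ depends crucially on the $L^\infty$-bound $|\vv|\leqslant 2$ from the sphere constraint to justify dominated convergence on one side, and on Fatou's lemma on the other — the latter being precisely what turns the pointwise equality into the desired one-sided bound.
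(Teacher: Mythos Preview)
Your argument for the main inequality \eqref{eq:mainresulest} is correct and takes a genuinely different route from the paper. The paper first establishes the \emph{identity}
\[
\int_\Om |\nabla\vv|^2 = \int_\Om m_1^2\,\bigl|\nabla(\vv/m_1)\bigr|^2 + \int_\Om(|\nabla\m|^2+\kappa^2 m_3^2)\,|\vv|^2
\]
for $\vv\in C^\infty_c(\Om,\RR^3)$ (where $m_1$ is bounded below on the support of $\vv$, so $\vv/m_1$ makes sense), and then extends to general $\vv\in H^1_0\cap L^\infty$ via a double limit: a density approximation $\vv_n\to\vv$ combined with a compact exhaustion of $\Om$ and monotone convergence. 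Your $\varepsilon$-shift $m_1\mapsto m_1+\varepsilon$ instead produces an exact identity valid directly for $\vv\in H^1_0\cap L^\infty$, and the single limit $\varepsilon\to 0$ (dominated convergence on the potential term, Fatou on the gradient term) is arguably cleaner. Both approaches rest on the same scalar equation for $m_1$ and the bound $|\vv|\leqslant 2$; yours has the pleasant feature that the proof of the inequality never actually uses the local lower bound on $m_1$.

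There is, however, a gap in your justification of that local lower bound (which is part of the statement of the lemma). The claim ``a standard bootstrap makes $m_1$ continuous'' is not warranted under the hypotheses: the right-hand side $(|\nabla\m|^2+\kappa^2 m_3^2)\,m_1$ of the scalar equation is a priori only in $L^1(\Om)$, and in dimension $N\geqslant 2$ there is no elliptic bootstrap from $-\Delta m_1\in L^1$ to continuity (weak solutions of harmonic-map-type systems need not be continuous in dimension $N\geqslant 3$). The paper sidesteps this entirely: it only observes that $m_1$ is a positive weakly \emph{super}harmonic function (the right-hand side being nonnegative) and invokes the weak Harnack--Moser inequality, which yields a positive lower bound for $m_1$ on every compact subset of $\Om$ without any continuity. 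You should replace the bootstrap-plus-minimum-principle step by that argument.
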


\begin{proof}[Proof of \Cref{prop:mainprop}]
  We follow the ideas of \cite[Lemma A.1]{INSZ15}, \cite[Theorem 4.3]{Dif2016} and \cite[Theorem 5.1]{INSZ18}. We have
  \[ \mathcal{E}_{\kappa} (\m+\tmmathbf{v}) -\mathcal{E}_{\kappa}
     (\m) = \int_{\Om} | \nabla \vv |^2 + \kappa^2  \int_{\Om} (\tmmathbf{v}
     \cdot \tmmathbf{e}_3)^2 + 2 \int_{\Om} \nabla \m  \Fsp \nabla \vv + 2
     \kappa^2 \int_{\Om} \left( \m \cdot \tmmathbf{e}_3 \right) (\tmmathbf{v}
     \cdot \tmmathbf{e}_3) . \]
  Note that since $\abs{\m} = \abs{\m+\tmmathbf{v}} =
  1$ a.e., we also have $| \vv | \leqslant 2$ a.e.\ in $\Om$. In particular, $\tmmathbf{v} \in H^1_0 \left( \Om, \RR^3 \right) \cap L^{\infty}
  (\Om, \RR^3)$. Since $\m$ satisfies the Euler-Lagrange equations
  {\eqref{eq:ELweak}}, we get
  \[ \mathcal{E}_{\kappa} (\m+\tmmathbf{v}) -\mathcal{E}_{\kappa}
     (\m) = \int_{\Om} | \nabla \vv |^2 + \kappa^2  \int_{\Om} (\tmmathbf{v}
     \cdot \tmmathbf{e}_3)^2 + 2 \int_{\Om} (| \nabla \m |^2 + \kappa^2 
     \left( \m \cdot \tmmathbf{e}_3 \right)^2) \m \cdot \tmmathbf{v}. \]
  On the other hand, since $\abs{\m+\tmmathbf{v}} = 1$, we have $ 2
  \m \cdot \vv =-| \vv |^2$ and, therefore,
  \begin{equation}
    \mathcal{E}_{\kappa} (\m+\tmmathbf{v}) -\mathcal{E}_{\kappa}
    (\m) = \int_{\Om} | \nabla \vv |^2 + \kappa^2  \int_{\Om} (\tmmathbf{v}
    \cdot \tmmathbf{e}_3)^2 - \int_{\Om} (| \nabla \m |^2 + \kappa^2  \left(
    \m \cdot \tmmathbf{e}_3 \right)^2) | \tmmathbf{v} |^2 .
  \end{equation}

Hence, \eqref{eq:mainresulest} will follow once we prove that for all $\vv \in H^1_0\cap L^\infty \left( \Om, \RR^3 \right)$,
  \begin{equation}\label{linearEstimate}
  \int_{\Om} | \nabla \vv |^2 \geqslant \int_{\Om} (| \nabla \m |^2 + \kappa^2  \left(
    \m \cdot \tmmathbf{e}_3 \right)^2) | \tmmathbf{v} |^2
    + \int_{\Om} m_1^2  \left| \nabla \left(
    \frac{\tmmathbf{v}}{m_1} \right) \right|^2.
  \end{equation}
  We first assume that $\tmmathbf{v} \in C_c^{\infty} (\Om, \RR^3)$, the general case will follow by density. 

  Now, by the Euler-Lagrange equation of \(m_1\) in \eqref{eq:ELweak} and since \(m_1\) is assumed to be positive in \(\Om\), we have in particular that \(m_1\) is a positive weak superharmonic function, i.e. \(\Delta m_1\leqslant 0\) weakly in \(\Om\); we deduce from the weak Harnack-Moser inequality (see \cite[Theorem 14.1.2.]{jostPartialDifferentialEquations2013}) that \(m_1\) is bounded from below by a positive constant on the support of $\tmv$. Hence, we can write $\tmv$ in the form
  \begin{equation}
    \tmmathbf{v}_{} = m_1 \tmmathbf{u},
  \end{equation}
  where $\tmmathbf{u}= \frac{\tmmathbf{v}}{m_1} \in H_0^1 \left( \Om, \RR^3
  \right) \cap L^{\infty} (\Om, \RR^3)$. We then compute
  \begin{align}
    \int_{\Om} | \nabla \vv |^2 & \eqs  \sum_{j = 1}^N \int_{\Om}
    |\tmmathbf{u} \partial_j m_1 + m_1 \partial_j \tmmathbf{u}|^2 \\
    & \eqs  \int_{\Om} | \tmmathbf{u} |^2  \left| \grad m_1 \right|^2 +
    m_1^2  \left| \grad \tmmathbf{u} \right|^2 + m_1 \grad m_1 \cdot \grad |
    \tmmathbf{u} |^2 \\
    & \eqs  \int_{\Om} m_1^2  \left| \grad \tmmathbf{u} \right|^2 + \grad
    m_1 \cdot \grad (m_1 | \tmmathbf{u} |^2) .  \label{eq:exprgradv}
  \end{align}
  Now, testing the Euler-Lagrange equations \eqref{eq:ELweak} against $\tmmathbf{\varphi}
  \assign m_1 | \tmmathbf{u} |^2 \tmmathbf{e}_1 \in H^1_0 \left( \Om, \RR^3
  \right) \cap L^{\infty} (\Om, \RR^3)$, we obtain
  \begin{equation}
    \int_{\Om} \grad m_1 \cdot \grad (m_1 | \tmmathbf{u} |^2) = \int_{\Om} (|
    \nabla \m |^2 + \kappa^2  \left( \m \cdot \tmmathbf{e}_3 \right)^2) m_1^2
    | \tmmathbf{u} |^2 .
  \end{equation}
  Combining the previous two relations, and recalling that $\tmmathbf{v}= m_1
  \tmmathbf{u}$, we obtain the following identity:
  \begin{equation}
    \int_{\Om} | \nabla \vv |^2 = \int_{\Om} m_1^2  \left| \grad \tmmathbf{u}
    \right|^2 + (| \nabla \m |^2 + \kappa^2  \left( \m \cdot \tmmathbf{e}_3
    \right)^2) | \tmmathbf{v} |^2 . \label{secondEstimateDif}
  \end{equation}
This proves \eqref{linearEstimate} in the case where $\tmmathbf{v} \in C_c^{\infty}(\Om, \RR^3)$. In general, we have $\vv \in H^1_0 \left( \Om, \RR^3 \right)
  \cap L^{\infty} (\Om, \RR^3)$ and there thus exists a sequence $\left( \vv_n
  \right)_{n \in \mathbb{N}} $ in $C_c^{\infty} (\Om, \RR^3)$ such that
  \[
  \sup_{n \in \NN} \| \vv_n \|_{\infty} \leqslant \| \vv \|_{\infty} + 1
  \] 
  and
  \begin{equation}
    \tmmathbf{v}_n \rightarrow \tmmathbf{v} \quad \text{in } H_0^1 \left( \Om,
    \RR^3 \right) .
  \end{equation}
  By the previous computations in the smooth case, we have for every compact
  $K \subset \Om$ and $n \in \NN$,
  \[ 
  \int_{\Om} | \nabla \vv_n |^2 \geqslant \int_{K} (| \nabla \m |^2 + \kappa^2  \left(
    \m \cdot \tmmathbf{e}_3 \right)^2) | \tmmathbf{v}_n |^2
    + \int_{K} m_1^2  \left| \nabla \left(
    \frac{\tmmathbf{v}_n}{m_1} \right) \right|^2.  
  \]
  The conclusion follows by passing to the limit $n \rightarrow \infty$ using
  the dominated convergence theorem, and then taking the supremum over
  compacts $K \subset \Om$ using the monotone convergence theorem.
\end{proof}

\begin{lemma}
  \label{prop:mainprop1}
Let $\m \in H^1(\Om, \Stwo^2 )$ and $\vv \in H^1 \left( \Om, \RR^3 \right)$ satisfy $\m+\tmmathbf{v}\in\Stwo^2$ a.e.\  in \(\Omega\). If $\m$ satisfies the Euler-Lagrange equations \eqref{eq:ELweak} and if $m_1, m_3 > 0$  in $\overline\Om$, then 
  \begin{equation}
    \Ekg (\m+\tmmathbf{v}) -\Ekg
    (\m) \geqslant  \int_{\Om} m_1^2  \abs{\grad \tmmathbf{u}^\perp}^2 + m_3^2  \abs{\grad u_3}^2,
  \end{equation}
where we wrote $\vv = m_1 \tmu^\perp +  m_3 u_3 \e_3$ with $\tmu =(\tmu^\perp, u_3) \in H^1(\Omega, \RR^3)$.
\end{lemma}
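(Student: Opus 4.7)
The plan is to adapt the proof of Lemma~\ref{prop:mainprop} to handle two new features: the boundary penalization terms present in both the energy and the Euler--Lagrange equations, and the non-vanishing trace of $\vv$ on $\partial\Om$. The key enabling point is that, since $m_1$ and $m_3$ are continuous and strictly positive on the compact set $\overline\Om$, each of them is bounded below by a positive constant. Consequently, the componentwise rescaling $\vv = m_1\tmu^\perp + m_3 u_3 \e_3$ produces $\tmu \in H^1(\Om,\RR^3) \cap L^\infty(\Om,\RR^3)$, and the test functions $\varphi_1 \assign m_1|\tmu^\perp|^2 \e_1$ and $\varphi_3 \assign m_3 u_3^2 \e_3$ are both admissible in the weak Euler--Lagrange equation \eqref{eq:ELweak}.

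First, I would expand $\Ekg(\m+\vv) - \Ekg(\m)$, exploit $2\m\cdot\vv = -|\vv|^2$ (valid a.e.\ in $\Om$ and on $\partial\Om$ via traces, since $|\m|=|\m+\vv|=1$), and test \eqref{eq:ELweak} against $\vv$ to eliminate the cross-terms. The boundary contribution from \eqref{eq:ELweak} combines with the boundary term of $\Ekg$ to yield
\begin{equation*}
\Ekg(\m+\vv) - \Ekg(\m) = \int_\Om|\grad\vv|^2 + \kappa^2 v_3^2 - \int_\Om(|\grad\m|^2 + \kappa^2 m_3^2)|\vv|^2 + \frac{1}{\gamma^2}\int_{\partial\Om}\bigl[m_3^2|\vv|^2 - v_3^2\bigr].
\end{equation*}
Second, I would expand $|\grad\vv|^2$ according to the decomposition, which gives the pointwise identity
\begin{equation*}
|\grad\vv|^2 = m_1^2|\grad\tmu^\perp|^2 + m_3^2|\grad u_3|^2 + \grad m_1\cdot\grad(m_1|\tmu^\perp|^2) + \grad m_3\cdot\grad(m_3 u_3^2),
\end{equation*}
and convert the last two bulk integrals by testing \eqref{eq:ELweak} against $\varphi_1$ and $\varphi_3$, respectively.

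Summing the two tested identities and using $v_3 = m_3 u_3$ together with $|\vv|^2 = m_1^2|\tmu^\perp|^2 + v_3^2$, one finds that all the bulk corrections involving $(|\grad\m|^2+\kappa^2 m_3^2)|\vv|^2$ and $\kappa^2 v_3^2$ cancel, and the boundary contributions from the two EL tests cancel the residual boundary integral above exactly. This yields the clean \emph{equality}
\begin{equation*}
\Ekg(\m+\vv) - \Ekg(\m) = \int_\Om m_1^2|\grad\tmu^\perp|^2 + m_3^2|\grad u_3|^2,
\end{equation*}
which is stronger than the claimed inequality. The main delicate point of the argument is precisely this boundary cancellation: the split rescaling (factor $m_1$ on the in-plane part and $m_3$ on the vertical part) is forced on us by the fact that any uniform rescaling would leave a residual, unsigned boundary integral. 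If needed, a density argument along the lines of the end of the proof of Lemma~\ref{prop:mainprop} justifies passing from smooth $\tmu$ to the general case.
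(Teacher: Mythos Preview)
Your proposal is correct and follows essentially the same route as the paper: the paper also expands the energy difference, uses $2\m\cdot\vv=-|\vv|^2$ and the Euler--Lagrange equations to reduce to the same intermediate formula, decomposes $\vv=m_1\tmu^\perp+m_3 u_3\e_3$, and exploits the $\e_1$- and $\e_3$-components of the Euler--Lagrange system (via integration by parts rather than weak testing, but equivalently) to arrive at the very same \emph{equality} you state. Your observation that the split rescaling is exactly what forces the boundary terms to cancel is the key point, and it matches the paper's computation line by line.
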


\begin{proof}[Proof of \Cref{prop:mainprop1}]
  As in Lemma~\ref{prop:mainprop} we have
  \begin{align}
   \Ekg (\m+\tmmathbf{v}) -\Ekg
     (\m) &= \int_{\Om} | \nabla \vv |^2 + \kappa^2  \int_{\Om} (\tmmathbf{v}
     \cdot \tmmathbf{e}_3)^2 + 2 \int_{\Om} \nabla \m  \Fsp \nabla \vv + 2
     \kappa^2 \int_{\Om} \left( \m \cdot \tmmathbf{e}_3 \right) (\tmmathbf{v}
     \cdot \tmmathbf{e}_3) \notag \\
     &\qquad - \frac{1}{\gamma^2} \int_{\partial\Omega}(\vv \cdot \tmmathbf{e}_3)^2 + 2 (\m \cdot \tmmathbf{e}_3) (\vv \cdot \tmmathbf{e}_3) 
   \end{align}
and $\tmmathbf{v} \in H^1 \left( \Om, \RR^3 \right) \cap L^{\infty}
  (\Om, \RR^3)$. Since $\m$ satisfies the Euler-Lagrange equations
  {\eqref{eq:ELweak}}
and $ 2  \m \cdot \vv =-| \vv |^2$ we obtain
  \begin{align}
    \Ekg (\m+\tmmathbf{v}) -\Ekg
    (\m) &= \int_{\Om} | \nabla \vv |^2 + \kappa^2  \int_{\Om} (\tmmathbf{v}
    \cdot \tmmathbf{e}_3)^2 - \int_{\Om} (| \nabla \m |^2 + \kappa^2  \left(
    \m \cdot \tmmathbf{e}_3 \right)^2) | \tmmathbf{v} |^2 \notag \\
        &\qquad-\frac{1}{\gamma^2} \int_{\partial\Omega}(\vv \cdot \tmmathbf{e}_3)^2 -  \left( \m \cdot \tmmathbf{e}_3 \right)^2 |\vv|^2. 
  \end{align}
Now we use the fact $m_1>0$, $m_3>0$ in $\overline\Omega$ and represent $\vv = m_1 \tmu^\perp +  m_3 u_3 \e_3$ with $\tmu =(\tmu^\perp, u_3) \in H^1(\Omega, \RR^3)$. In that case, we have (in the following computations, we assume that \(\m\) is smooth; the general case follows similarly, as in the proof of \Cref{prop:mainprop})
 \begin{align}
    \int_{\Om} | \nabla \vv |^2 &= \int_{\Om} m_1^2  \abs{\grad \tmmathbf{u}^\perp}^2 + \grad
    m_1 \cdot \grad (m_1 | \tmmathbf{u}^\perp |^2)   + m_3^2 \abs{\nabla u_3}^2 + \nabla m_3 \cdot \nabla (m_3 u_3^2) \\\
    &= \int_{\Om} m_1^2  \abs{\grad \tmmathbf{u}^\perp}^2 - \Delta
    m_1 \  m_1 \abs{\tmmathbf{u}^\perp}^2 + \int_{\partial \Om} \partial_{\n} m_1 \ m_1 \abs{\tmu^\perp}^2 \notag\\
    &\qquad+\int_{\Om} m_3^2  \abs{\grad u_3}^2 - \Delta
    m_3 \  m_3\, u_3^2 + \int_{\partial \Om} \partial_{\n} m_3 \ m_3 \, u_3^2 \\
     &=  \int_{\Om} m_1^2  \abs{\grad \tmmathbf{u}^\perp}^2 + m_3^2  \abs{\grad u_3}^2 +   
    (| \nabla \m |^2 + \kappa^2  \left( \m \cdot
    \tmmathbf{e}_3 \right)^2) |\vv|^2 - \kappa^2 (\vv \cdot \e_3)^2\notag  \\
     &\qquad + \frac{1}{\gamma^2} \int_{\partial \Om}  (\vv \cdot \e_3)^2 - m_3^2 |\vv|^2 .
      \end{align}
%(Here, we considered the case \(\m\in \mathcal{C}^\infty(\overline{\Omega},\Stwo^2)\); the general case follows similarly, considering the weak form of the Euler-Lagrange equations, as in the proof of .)
Plugging it into the energy difference we obtain
  \begin{align}
    \Ekg (\m+\tmmathbf{v}) -\Ekg
    (\m) = \int_{\Om} m_1^2  \abs{\grad \tmmathbf{u}^\perp }^2 + m_3^2  \abs{\grad u_3}^2.
  \end{align}
  This concludes the proof.
\end{proof}

\begin{proof}[Proof of \Cref{corollaryDirichlet}]
We first consider the case $\gamma=0$. If the constant out-of-plane configurations \(\pm \e_3\) are the only global minimizers of \(\mathcal{E}_{\kappa,0}\), we are done. If not, this means by \Cref{structureMinimizers} that \(\mathcal{E}_{\kappa,0}\) has a global minimizer of the form \( \m=(\sin\varphi,0,\cos\varphi)\) with \(\varphi\in H^1(\Om)\) such that \(0<\varphi\leqslant\frac\pi 2\) a.e.\  in \(\Om\). If \(\Bar\m=\m+\vv\) is another minimizer with \(\vv\in H^1_0(\Om,\RR^3)\), then we have by \Cref{prop:mainprop} that \(\vv=m_1 \vv_0\) for some \(\vv_0\in\RR^3\). But, in order to satisfy the constraint \(\m+m_1\vv_0\in\Stwo^2\), we must have \(\vv_0\cdot (m_1\vv_0+2\m)=0\). Restricted to the boundary \(\partial\Om\), where we have \(\m=\e_3\), this condition yields \(\vv_0\cdot \e_3=0\). Hence, since \(m_2\equiv 0\), we arrive at the equation \(0=\vv_0\cdot (m_1\vv_0+2m_1 e_1)\) which means that \(\abs{\vv_0+e_1}^2=1\). Hence, \(\vv_0=(\cos \theta -1,\sin\theta,0)\) for some \(\theta\in\RR\), which means that \(\Bar\m=(\cos\theta \sin\varphi,\sin\theta\sin\varphi,\cos\varphi)\), i.e. \(\m\) is a rotation of \(\m\) of angle \(\theta\) around the \(x_3\)-axis.

\vskip 0.2cm

In the case $\gamma>0$ if constant in-plane or constant out-of-plane $\pm \e_3$ configurations are the only minimizers then the result follows by noting that $\pm \e_3$ and constant in-plane configurations cannot be minimizers simultaneously. Indeed, assuming that their energies coincide and are optimal, then we would have that the constant vectors $\m=(a_1, 0, a_3)$, with $a_1, a_3>0$ such that \(a_1^2+a_3^2=1\), have the same energy; but these configurations do not solve the Euler-Lagrange equations, contradicting the minimality.
 
If constant configurations are not the only minimizers, then by \Cref{structureMinimizers} the energy \(\mathcal{E}_{\kappa,\gamma}\) has a global minimizer of the form \( \m=(\sin\varphi,0,\cos\varphi)\) with \(\varphi\in H^1(\Om)\) such that \(0<\varphi<\frac\pi 2\) \  in \(\overline\Om\). If \(\Bar\m=\m+\vv\) is another minimizer with \(\vv\in H^1(\Om,\RR^3)\), then we have by \Cref{prop:mainprop1} that \(\vv=m_1 \ww^\perp + m_3 w_3 \mathbf \e_3 \) for some \(\ww=(\ww^\perp, w_3) \in\RR^3\). But, in order to satisfy the constraint \(\m+\vv\in\Stwo^2\), we must have $w_3 \in \{0, -2\}$ and $|\ww^\perp + \e_1| =1$, implying the result.
\end{proof}

\begin{remark} \label{rmk:unique}
We note that using Lemma~\ref{prop:mainprop} we actually proved the uniqueness (up to a symmetry) of a solution of the Euler-Lagrange equation \eqref{eq:ELstrong} with $m_1>0$ under boundary condition $\m=\pm \e_3$ for any $\kappa>0$. In a  similar way it is possible to prove the same result under a prescribed Dirichlet boundary conditions
$\tmmathbf g\in H^{1 / 2} \left( \partial \Om, \Stwo^2 \right)$ with either $g_1>0$ or $g_2>0$. 

Analogously, using Lemma~\ref{prop:mainprop1} we have proved the uniqueness (up to a symmetry) of a solution of the Euler-Lagrange equation \eqref{eq:ELweak} with $m_1>0$ and $m_3>0$ on $\overline \Om$ for any $\gamma>0$ and $\kappa>0$.
\end{remark}

\section{Comparison of solutions} \label{sec:compsols}

\subsection{Minimizers with different penalizations}
Before proving 
\Cref{theoremComparison}, we introduce the localized energy functional defined for \(\kappa\geqslant 0\), \(\gamma>0\), $\varphi \in H^1 (\Omega, \RR)$ and every Borel set \(O\subset\RR^N\), by
\begin{equation}
\mathcal{F}_{\kappa,\gamma}(\varphi,O)\assign \int_{O\cap\Omega}\abs{\nabla \varphi}^2 + \kappa^2  \int_{O\cap\Omega} \cos^2 \varphi +\frac{1}{\gamma^2}\int_{O\cap\partial\Omega}\sin^2\varphi.
\end{equation}
When \(\gamma=0\), we also define for every Borel subset \(O\subset\Omega\),
\begin{equation}
\mathcal{F}_{\kappa,0}(\varphi,O)\assign \int_{O}\abs{\nabla \varphi}^2 + \kappa^2  \int_{O} \cos^2 \varphi.
\end{equation}
\begin{proof}[Proof of \Cref{theoremComparison}]
We proceed in 5 steps. 

\noindent{\it Step 1. Energy estimates.}
Let $\varphi_i$ be a minimizer of the energy $\mathcal{E}_{\kappa_i,\gamma_i}$ valued into \([0,\frac \pi 2]\), with \(i=1,2\). By Theorem~\ref{structureMinimizers} we know that \(\varphi_1\) and \(\varphi_2\) are smooth on \(\overline\Omega\). We define 
\[
O\assign\{x\in\overline\Omega\st\varphi_1(x)>\varphi_2(x)\};
\]
we shall see that \(O=\emptyset\), i.e., \(\varphi_1\leqslant\varphi_2\) on \(\overline\Omega\). First, observe that, using  \(\nabla\varphi_1=\nabla\varphi_2\) a.e.\ on \(\{\varphi_1=\varphi_2\}\cap\Omega\) and comparing the functions \(\varphi_1,\varphi_2\) with their minimum \(\varphi_1\wedge\varphi_2\) and their maximum \(\varphi\vee\varphi_2\), by minimality,
\begin{equation}
\label{minimality_MinMax}
\mathcal{F}_{\kappa_1,\gamma_1}(\varphi_1,\Bar\Omega)\leqslant \mathcal{F}_{\kappa_1,\gamma_1}(\varphi_1\wedge\varphi_2,\Bar\Omega)
\quad
\text{and}
\quad
\mathcal{F}_{\kappa_2,\gamma_2}(\varphi_2,\Bar\Omega)\leqslant \mathcal{F}_{\kappa_2,\gamma_2}(\varphi_1\vee\varphi_2,\Bar\Omega).
\end{equation}

Hence, using \eqref{minimality_MinMax}, we obtain the following estimates (where if \(\gamma_1=0\), so that \(\varphi_1\in H^1_0(\Omega)\) and \(O\subset\Omega\), all the boundary terms are ignored),
\begin{align*}
\mathcal{F}_{\kappa_1,\gamma_1}(\varphi_1,O)&\leqslant
\mathcal{F}_{\kappa_1,\gamma_1}(\varphi_2,O)\\
&=\mathcal{F}_{\kappa_2,\gamma_2}(\varphi_2,O)+\left(\kappa_1^2-\kappa_2^2\right)\int_{O\cap\Omega}\cos^2\varphi_2+\left(\frac{1}{\gamma_1^2}-\frac{1}{\gamma_2^2}\right)\int_{O\cap\partial\Om}\sin^2\varphi_2\\
&\leqslant \mathcal{F}_{\kappa_2,\gamma_2}(\varphi_1,O)+(\kappa_1^2-\kappa_2^2)\int_{O\cap\Omega}\cos^2\varphi_2+\left(\frac{1}{\gamma_1^2}-\frac{1}{\gamma_2^2}\right)\int_{O\cap\partial\Om}\sin^2\varphi_2\\
&= \mathcal{F}_{\kappa_1,\gamma_1}(\varphi_1,O)+(\kappa_1^2-\kappa_2^2)\int_{O\cap\Omega}\left(\cos^2\varphi_2-\cos^2\varphi_1\right)\\
&\qquad+\left(\frac{1}{\gamma_1^2}-\frac{1}{\gamma_2^2}\right)\int_{O\cap\partial\Om}\left(\sin^2\varphi_2-\sin^2\varphi_1\right),
\end{align*}
from which we infer that
\begin{equation*}
(\kappa_1^2-\kappa_2^2)\int_{O\cap\Omega}\bigl(\cos^2\varphi_2-\cos^2\varphi_1\bigr)+\left(\frac{1}{\gamma_1^2}-\frac{1}{\gamma_2^2}\right)\int_{O\cap\partial\Om}\bigl(\sin^2\varphi_2-\sin^2\varphi_1\bigr)\geqslant 0.
\end{equation*}
Now we use the assumption \(\kappa_1\leqslant\kappa_2\), \(\gamma_1\leqslant\gamma_2\) and \((\kappa_1,\gamma_1)\neq (\kappa_2,\gamma_2)\). Noticing that \(x\mapsto\cos^2(x)\) is decreasing and $x\mapsto\sin^2(x)$ is increasing on \([0,\frac \pi 2]\), and that \(\varphi_2<\varphi_1\) on \(O\) by definition, it entails
\begin{equation}
\label{comparison_energy}
\begin{cases}
O\cap\Omega=\emptyset&\text{if \(\kappa_1<\kappa_2\),}\\
O\cap\partial\Omega=\emptyset&\text{if \(\gamma_1<\gamma_2\)}.
\end{cases}
\end{equation}
(Note that when \(\gamma_1=0\), the second assertion is not a consequence of the previous estimates, but it is trivially satisfied because \(\varphi_1=0\) on \(\partial\Omega\) and \(O\subset\Omega\) in this case.)

The first assertion means that
\begin{equation}
\label{comparison_kappaLess}
\varphi_1\leqslant\varphi_2\quad \text{in \(\overline\Omega\) if \(\kappa_1<\kappa_2\).}
\end{equation}

When \(\kappa_1=\kappa_2\), we need more work:

\noindent{\it Step 2. Comparison of solutions when \(\kappa_1=\kappa_2=:\kappa\) and \(\gamma_1<\gamma_2\).} By \eqref{comparison_energy}, we have
\begin{equation}
\label{comparison_boundary}
\varphi_1\leqslant\varphi_2\quad\text{on }\partial\Omega.
\end{equation}
Hence, \(\varphi_1=\varphi_1\wedge\varphi_2\) and \(\varphi_2=\varphi_1\vee\varphi_2\) on \(\partial\Omega\). By minimality (cf.~Remark~\ref{rmk:unique}) this yields
\[
\mathcal{E}_{\kappa}(\varphi_1)\leqslant \mathcal{E}_{\kappa}(\varphi_1\wedge\varphi_2)
\quad
\text{and}
\quad
\mathcal{E}_{\kappa}(\varphi_2)\leqslant \mathcal{E}_{\kappa}(\varphi_1\vee\varphi_2).
\]
Since we have also \(\mathcal{E}_{\kappa}(\varphi_1)+\mathcal{E}_{\kappa}(\varphi_2)=\mathcal{E}_{\kappa}(\varphi_1\wedge\varphi_2)+\mathcal{E}_{\kappa}(\varphi_1\vee\varphi_2)\), we obtain that
\[
\mathcal{E}_{\kappa}(\varphi_1)= \mathcal{E}_{\kappa}(\varphi_1\wedge\varphi_2)
\quad
\text{and}
\quad
\mathcal{E}_{\kappa}(\varphi_2)= \mathcal{E}_{\kappa}(\varphi_1\vee\varphi_2).
\]
The first equality means that both \(\varphi_1\) and \(\varphi_1\wedge\varphi_2\) minimize \(\mathcal{E}_\kappa\) under their own Dirichlet boundary condition on \(\partial\Omega\); in particular, they solve the Euler-Lagrange equation \(-\Delta u=\kappa^2\sin u\) in the variable \(u=2\varphi\). By uniqueness for positive solutions to sublinear elliptic equations with Dirichlet boundary conditions (see \cite[Appendix II]{brezisSublinearEllipticEquations1992}), we deduce that either \(\varphi_1\equiv 0\), or \(\varphi_1\wedge\varphi_2\equiv 0\), or \(\varphi_1\equiv \varphi_1\wedge\varphi_2\). In the case where \(\varphi_1\wedge\varphi_2\equiv 0\) but \(\varphi_1\) is not identically \(0\), since \(\Delta\varphi_2\leqslant 0\), we deduce from the strong maximum principal that \(\varphi_2\equiv 0\); in particular, \(\varphi_1=0\) on \(\partial\Omega\) by \eqref{comparison_boundary}, and we deduce from our uniqueness result for minimizers under homogeneous Dirichlet boundary conditions \Cref{corollaryDirichlet}, that \(\varphi_1\equiv\varphi_2(\equiv 0)\) on \(\Om\). In any case, we have proved that
\begin{equation}
\label{comparison_sameKappa}
\varphi_1\leqslant \varphi_2\quad \text{on \(\overline\Om\) if \(\gamma_1<\gamma_2\)}.
\end{equation}

\noindent{\it Step 4. Strict comparison of solutions in \(\Omega\).}
We know that \(u_i=2\varphi_i\) solve the Euler--Lagrange equations associated with \(\mathcal{E}_{\kappa_i,\gamma_i}\),
\begin{align}
-\Delta u_i &=\kappa_i^2\sin(u_i) \quad \text{in \(\Om\),}\label{inOmega}\\
\frac{\partial u_i}{\partial \n} &=-\frac{1}{\gamma_i^2}\sin u_i \quad\text{on \(\partial \Om\) if \(0<\gamma_i\) (and \(u_i\equiv 0\) on \(\partial\Omega\) if \(\gamma_i=0\))},\label{onBoundary}
\end{align}
and that, by \eqref{comparison_kappaLess} and \eqref{comparison_sameKappa},
\begin{equation}
u_1\leqslant u_2\quad\text{in \(\overline\Omega\).}
\end{equation}
Using that \(u\mapsto u+\sin u\) is non decreasing and that \(\kappa_1\leqslant\kappa_2\), we find that
\begin{equation}
\label{strongMaximum}
-\Delta(u_2-u_1)+\kappa_1^2(u_2-u_1)\geqslant\kappa_1^2(\sin (u_2)+u_2-\sin(u_1)-u_1)\geqslant 0\quad\text{in \(\Om\)}.
\end{equation}
By the strong maximum principal, we obtain that either \(u_2\equiv u_1\) in \(\Om\), or \(u_1<u_2\) in \(\Om\).

In the first case, i.e., \(u_1\equiv u_2\), we deduce by \eqref{inOmega} that if \(\kappa_1<\kappa_2\) then either \(u_1=u_2\equiv 0\) or \(u_1=u_2\equiv\pi\) in \(\Omega\); when \(\kappa_1=\kappa_2\) and \(\gamma_1<\gamma_2\), we have by \eqref{onBoundary} that either \(u_1=u_2\equiv\pi\) or \(u_1=u_2\equiv 0\) on \(\partial\Omega\) and by Hopf lemma it follows that 
\(u_1\) and $u_2$ are constants in \(\Omega\) since \(\frac{\partial u_i}{\partial \n}\equiv 0\) on \(\partial\Omega\).

\noindent{\it Step 5. Strict comparison of solutions on \(\partial\Omega\) when \(u_1<u_2\) in \(\Om\) and $\gamma_2>0$.}
In this case, we want to show that  \(u_1<u_2\) on \(\partial\Om\). Assume there is $x_0 \in \partial \Omega$ such that $u_1(x_0)=u_2(x_0)$ then by Hopf lemma we obtain \(\frac{\partial (u_2-u_1) }{\partial \n}(x_0)<0\). If \(0<\gamma_1\leqslant\gamma_2\) then using \eqref{onBoundary} we deduce that 
$\frac{\partial (u_2-u_1) (x_0)}{\partial \n} \geqslant 0$, obtaining the contradiction.  If \(\gamma_1=0<\gamma_2\), we know that \(u_1=0\) on \(\partial\Omega\); hence $u_2(x_0)=0$ and, by \eqref{onBoundary}, \(\frac{\partial u_2}{\partial \n}(x_0)=0\). Applying Hopf lemma to $u_2$ we obtain that \(u_2\equiv 0\) in \(\overline\Omega\), thus contradicting the fact that \(u_1<u_2\) in \(\Om\).  Therefore if $\gamma_2>0$ we have \(u_2>u_1\) on \(\overline\Om\).
\end{proof}

\section{Radial symmetry of minimizers in a ball: Proof of
\texorpdfstring{\Cref{prop:symm}}{}}\label{sec:5}
Numerical simulations suggest that when the domain $\Om$ has spherical
symmetry, the minimizers of $\Ekg$ are radially symmetric
({\tmabbr{cf.}}~Figure \ref{fig:1}). The aim of this section is to turn this
observation into a quantitative statement.

The proof we give below for the radial symmetry of minimizers $\Ekg$ also
works for the boundary value problem associated with $\mathcal{E}_{\kappa,
0}$. However, radiality of the minimizers of $\mathcal{E}_{\kappa, 0}$
immediately follows from a celebrated result of Gidas-Ni-Nirenberg
{\cite{Gidas1979}} about radial symmetry for semilinear elliptic equations. We
give the details below.

\begin{proposition}
  \label{corollaryDirichletRadial} If $\Om$ is a ball centered at the origin, then any minimizer $\m$ of the energy $\mathcal{E}_{\kappa, 0}$ is radially symmetric. More precisely, \(\m\) is either constant with \(\m\cdot \e_3\in\{0,-1,1\}\), or there exist $\sigma \in \Othreethree$ and a solution ${\varphi} :\RR^+ \to (0,\frac\pi 2)$ in \eqref{phaseDirichletProblem} such that
  \[ \m (x) = \sigma \circ (\sin {\varphi} (\abs{x}), 0, \cos
     {\varphi} (\abs{x})) \quad \text{a.e.\  in } \Om . \]
\end{proposition}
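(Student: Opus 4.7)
The plan is to reduce the problem to a scalar semilinear elliptic equation and then invoke the classical Gidas--Ni--Nirenberg radial symmetry theorem. The proof will proceed in three steps.

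First, I would apply \Cref{structureMinimizers} in the case $\gamma=0$. This yields the existence of $\sigma\in\Othreethree$ and a smooth lifting $\varphi\in \mathcal{C}^\infty(\overline{\Omega})$ with $0\leqslant \varphi\leqslant \frac\pi 2$ in $\overline\Omega$ such that $\m=\sigma\circ(\sin\varphi,0,\cos\varphi)$, and either $\varphi\equiv 0$ in $\overline\Omega$ (so $\m\equiv\pm\e_3$, which is trivially radial and yields the constant case of the statement) or $0<\varphi<\frac\pi 2$ in $\Omega$ together with $\varphi=0$ on $\partial\Omega$ (Dirichlet condition for $\gamma=0$). Only the nontrivial case requires further work.

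Second, in the nontrivial case, $\varphi$ minimizes the lifted energy \eqref{phaseDirichletProblem} in $H^1_0(\Omega)$, and hence satisfies the Euler--Lagrange equation
\begin{equation*}
-\Delta\varphi=\frac{\kappa^2}{2}\sin(2\varphi)\quad\text{in }\Omega,\qquad \varphi=0\ \text{on }\partial\Omega.
\end{equation*}
Equivalently, $u\assign 2\varphi$ solves $-\Delta u=\kappa^2\sin u$ in $B_R$ with $u=0$ on $\partial B_R$ and $0<u<\pi$ in $B_R$. The nonlinearity $t\mapsto \kappa^2\sin t$ is $\mathcal{C}^1$ on $\RR$, and by the classical strong maximum principle $u$ is positive throughout the open ball.

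Third, I would apply the Gidas--Ni--Nirenberg theorem \cite{Gidas1979} to $u$: since $\Omega=B_R$ is a ball centered at the origin, $u\in \mathcal{C}^2(\overline{B_R})$ is positive in $B_R$ and vanishes on $\partial B_R$, and the nonlinearity is $\mathcal{C}^1$, the theorem yields that $u$ is radially symmetric and $\partial_r u(r)<0$ for every $r\in(0,R)$. Therefore $\varphi(x)=\tilde{\varphi}(|x|)$ for a smooth strictly decreasing profile $\tilde{\varphi}\colon[0,R]\to[0,\tfrac{\pi}{2})$ with $\tilde{\varphi}(R)=0$ and $\tilde{\varphi}(r)\in(0,\tfrac{\pi}{2})$ for $r\in[0,R)$. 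Reinserting this into the expression for $\m$ gives
\begin{equation*}
\m(x)=\sigma\circ\bigl(\sin\tilde{\varphi}(|x|),\,0,\,\cos\tilde{\varphi}(|x|)\bigr)\quad\text{a.e.\ in }\Omega,
\end{equation*}
which is exactly the desired radial form.

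The argument is essentially a bookkeeping exercise after the two main inputs (\Cref{structureMinimizers} and Gidas--Ni--Nirenberg) are in place; there is no substantive obstacle. The only point that deserves a brief check is that all hypotheses of Gidas--Ni--Nirenberg are met, namely the $\mathcal{C}^1$ regularity of the nonlinearity (immediate since $\sin$ is smooth) and the strict positivity of $\varphi$ in $\Omega$ (which is guaranteed by \Cref{structureMinimizers} in the nontrivial case).
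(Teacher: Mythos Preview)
Your proposal is correct and follows essentially the same approach as the paper's proof: reduce to the scalar lifting via \Cref{structureMinimizers}, write down the Euler--Lagrange equation $-\Delta(2\varphi)=\kappa^2\sin(2\varphi)$ for the positive function $2\varphi$ with zero Dirichlet data, and invoke Gidas--Ni--Nirenberg. The paper's version is terser and does not spell out the monotonicity or the hypothesis check, but the strategy is identical.
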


\begin{proof}[Proof of \Cref{corollaryDirichletRadial}]
  Without loss of generality, one can assume that $\m$ is not constant. By
  \Cref{structureMinimizers}, there exists $\sigma \in
  \Othreethree$ and a solution $\varphi \in H_0^1( \Om) \cap \mathcal C^\infty(\overline\Om)$ of \eqref{phaseDirichletProblem} such that $\m = \sigma (\sin \varphi, 0,\cos \varphi)$ and \(0<\varphi<\frac\pi 2\)  in \(\Omega\). In particular, $\varphi$ solves the Euler-Lagrange equation
  $\Delta (2 \varphi) + \kappa^2 \sin (2 \varphi) = 0$  in
  {\Om}. The radial symmetry of $\varphi$ then follows from {\cite{Gidas1979}}.
\end{proof}

In the case of the penalization of the boundary datum, we use a reflection
method introduced in {\cite{Lopes}} and the unique continuation principle for
elliptic equations (see, for instance, {\cite{Muller}}). Note that this method also works for the boundary value problem associated with $\mathcal{E}_{\kappa, 0}$, and the following proof also covers \Cref{corollaryDirichletRadial}.

\begin{proof}[Proof of \Cref{prop:symm}]
 We concentrate on the case \(\gamma>0\). Without loss of generality, one can assume that {\m} is not a constant minimizer. By
  \Cref{structureMinimizers}, there exists $\sigma \in
  \Othreethree$ and a solution $\varphi \in \mathcal C^\infty (\overline B_R)$ of \eqref{phasePenalization} such that $\m = \sigma (\sin \varphi, 0, \cos \varphi)$ and \(0<\varphi<\frac\pi 2\) in \(\overline B_R\). As before, we get that $\varphi$ is a solution of
  \begin{equation}
    \Delta (2 \varphi) + \kappa^2 \sin (2 \varphi) = 0 \quad \text{in } B_R .
    \label{equationElliptic}
  \end{equation}
  Now, let $H$ be a hyperplane passing through the origin and dividing $\RR^N$
  into two half-spaces $H^+$ and $H^-$. Up to interchange $H^+$ and $H^-$, one
  can assume that
  \begin{align*}
  \int_{H^- \cap \, B_R} \abs{\nabla \varphi}^2 \,+\, \kappa^2 \cos^2 \varphi 
     &\,+\, \frac{1}{\gamma^2}  \int_{H^- \cap \, \partial B_R} \sin^2 \varphi \; \\
     &\leqslant \int_{H^+ \cap \, B_R} \abs{\nabla \varphi}^2 \,+ \,\kappa^2 \cos^2
     \varphi \,+ \,\frac{1}{\gamma^2}  \int_{H^+ \cap \, \partial B_R} \sin^2
     \varphi .
  \end{align*}
  Let $\varphi_{\ast} \in H^1 \left( B_R \right)$ be defined by
  $\varphi_{\ast} = \varphi$ on $H^- \cap B_R$ and $\varphi_{\ast} = \varphi
  \circ \sigma_H$ on $H^+ \cap B_R$ where $\sigma_H$ stands for the reflection
  through $H.$ By the previous inequality, we have that $\Ekg (\varphi_{\ast})
  \leqslant \Ekg (\varphi)$, i.e., $\varphi_{\ast}$ is also a global
  minimizer. Hence it also solves {\eqref{equationElliptic}}. However, since
  $\varphi_{\ast} = \varphi$ on $H^- \cap B_R$, we deduce by the unique
  continuation principle (see Theorem III in {\cite{Muller}}) that
  $\varphi_{\ast} = \varphi$, i.e., $\varphi = \varphi \circ \sigma_H$ in \(B_R\). Since the hyperplane $H$ is arbitrary, this means that $\varphi$ is radially symmetric. This means that we can write \(\varphi(x)=\frac{u(\abs{x})}{2}\) for every \(x\in B_R\), for some function \(u:[0,R]\to [0,\pi]\). Moreover, since \(\varphi\) is smooth, \(u\) is smooth.
  
We now argue that \(u\) is non-increasing. Indeed, define the non-increasing  rearrangement of \(u\) by \(u^\ast(r)=\sup_{s\in [r,R]}u(s)\). We have that \(u^\ast\) is Lipschitz with \(\abs{(u^\ast)'}\leqslant\abs{u'}\) on \([0,R]\) since if \(0\leqslant r_1\leqslant r_2\leqslant R\), then \(u^\ast(r_2)\leqslant u^\ast(r_1)\) and
\[
u^\ast(r_1)\leqslant \sup_{s\in [r_2,R]} u(s)+\sup_{s\in [r_1,r_2]}\abs{u(s)-u(r_2)} \leqslant u^\ast(r_2)+(r_2-r_1)\sup_{s\in[r_1,r_2]}\abs{u'(s)}.
\]
But then, the function \(\varphi^\ast\in W^{1,2}(B_R)\), defined by \(\varphi^\ast(x)=u^\ast(\abs{x})\) for every \(x\in B_R\), satisfies \(\varphi=\varphi^\ast\) a.e.\  on \(\partial B_R\), and \(\cos \varphi^\ast\leqslant\cos \varphi\) and \(\abs{\nabla\varphi^\ast}\leqslant\abs{\nabla\varphi}\) a.e.\  in \(B_R\). Hence, \(\cos \varphi^\ast=\cos \varphi\) a.e., and so \(\varphi=\varphi^\ast\) a.e., since otherwise, \(\varphi^\ast\) would have strictly less energy than \(\varphi\) in \eqref{phasePenalization}.
    
Finally, as a solution of \eqref{phasePenalization}, \(\varphi(x)=\frac{u(\abs{x})}{2}\) must be a solution of the associated Euler-Lagrange equation, which means that \(u\) solves the system \eqref{equationELradial}-\eqref{boundaryConditionEL}.
\end{proof}

\section*{Acknowledgments}

{\noindent}G.~Di~F. acknowledges the support of the Austrian Science Fund
(FWF) through the special research program {\it{Taming complexity in partial
differential systems}} (grant F65). G.~Di~F. and V.~S. would also like to
thank the Max Planck Institute for Mathematics in the Sciences in Leipzig for
support and hospitality. A.~M. and V.~S. acknowledge support by Leverhulme grant
RPG-2018-438. All authors also acknowledge support from the Erwin
Schr{\"o}dinger International Institute for Mathematics and Physics (ESI) in
Vienna, given on the occasion of the workshop on New Trends in the
{\it{Variational Modeling and Simulation of Liquid Crystals}} held at ESI on
December 2--6, 2019.

\vskip 0.1cm

{\noindent}{\tmname{Giovanni}} {\tmname{Di Fratta}}, TU Wien, Institute of
Analysis and Scientific Computing, Wiedner Hauptstra{\ss}e 8-10,
1040 Wien, Austria.

{\tmem{E-mail address:}}
\href{mailto:giovanni.difratta@asc.tuwien.ac.at}{giovanni.difratta@asc.tuwien.ac.at}

%\vskip 0.1cm

{\noindent}{\tmname{Antonin}} {\tmname{Monteil}}, University of Bristol,
School of Mathematics, Fry building, Woodland Road, Bristol BS8 1UG, United
Kingdom.

{\tmem{E-mail address:}}
\href{mailto:antonin.monteil@bristol.ac.uk}{antonin.monteil@bristol.ac.uk}

%\vskip 0.1cm

{\noindent}{\tmname{Valeriy}} {\tmname{Slastikov}}, University of Bristol,
School of Mathematics, Fry building, Woodland Road, Bristol BS8 1UG, United
Kingdom.

{\tmem{E-mail address:}}
\href{mailto:Valeriy.Slastikov@bristol.ac.uk}{valeriy.slastikov@bristol.ac.uk}

\end{document}